\title[Continuously many BD non-equivalences in substitution tiling spaces]{Continuously many bounded displacement non-equivalences in substitution tiling spaces}
\date{}
\author{Yaar Solomon}
\address{Ben-Gurion University of the Negev, Israel, {\tt yaars@bgu.ac.il}}
\newcommand{\N}{{\mathbb{N}}}
\newcommand{\Z}{{\mathbb{Z}}}
\newcommand{\R}{{\mathbb{R}}}
\newcommand{\C}{{\mathbb{C}}}
\newcommand{\X}{{\mathbb{X}}}
\newcommand{\Y}{{\mathbb{Y}}}
\newcommand{\PP}{\mathcal{P}}
\newcommand{\FF}{{\mathcal{F}}}
\newcommand{\QQ}{\mathcal{Q}}
\newcommand{\RR}{\mathcal{R}}
\newcommand{\TT}{\mathcal{T}}
\renewcommand{\SS}{\mathcal{S}}
\newcommand{\ee}{\mathbf{e}}
\newcommand{\pp}{\mathbf{p}}
\newcommand{\qq}{\mathbf{q}}
\newcommand{\vv}{\mathbf{v}}
\newcommand{\uu}{\mathbf{u}}
\newcommand{\xx}{\mathbf{x}}
\newcommand{\yy}{\mathbf{y}}
\newcommand{\PPP}{\mathscr{P}}
\newcommand{\bd}{\stackrel{\rm{\scriptscriptstyle BD}}{\sim}}
\newcommand{\cont}{2^{\aleph_0}}
\newcommand{\df}{{\, \stackrel{\mathrm{def}}{=}\, }}
\renewcommand{\span}[1]{\text{span}\!\left\{ {#1}\right\}}
\newcommand{\diam}[1]{\text{diam}\!\left({#1}\right)}
\newcommand{\supp}[1]{\text{supp}\!\left({#1}\right)}
\newcommand{\absolute}[1] {\left|{#1}\right|}
\newcommand{\norm}[1]{\left\|{#1}\right\|}
\newcommand{\inpro}[2]{\langle{#1},{#2}\rangle}
\newcommand{\vol}{\mathrm{vol}}
\newcommand{\BD}{\mathrm{BD}}
\newcommand{\ignore}[1]  {}
\theoremstyle{plain}
\newtheorem{thm}{Theorem}[section]
\newtheorem{lem}[thm]{Lemma}
\newtheorem{prop}[thm]{Proposition}
\newtheorem{cor}[thm]{Corollary}
\newtheorem{conj}[thm]{Conjecture}
\theoremstyle{definition}
\newtheorem{definition}[thm]{Definition}
\newtheorem{remark}[thm]{Remark}
\numberwithin{equation}{section}
\newif\ifdraft\drafttrue
\begin{document}
\begin{abstract}
      We consider substitution tilings in $\R^d$ that give rise to point sets that are not bounded displacement (BD) equivalent to a lattice and study the cardinality of $\BD(\X)$, the set of distinct BD class representatives in the corresponding tiling space $\X$. We prove a sufficient condition under which the tiling space contains continuously many distinct BD classes and present such an example in the plane. In particular, we show here for the first time that this cardinality can be greater than one. 
\end{abstract}

\maketitle

\section{Introduction}\label{sec:introduction}
Let $X,Y\subset\R^d$ be two discrete sets, i.e. sets with no accumulation points. We say that $X$ is \emph{bounded displacement (BD) equivalent} to $Y$, and denote $X\bd Y$, if there exists a bijection $\phi:X\to Y$ that satisfies $\sup_{x\in X}\norm{x-\phi(x)}<\infty$. Such a mapping $\phi$ is called a \emph{BD-map}. In a similar manner we consider tilings of $\R^d$ by tiles of bounded diameter and inradius that is bounded away from zero. We say that such tilings $\SS$ and $\TT$ are \emph{BD-equivalent}, and denote $\SS\bd\TT$, if there are point sets $X_\SS$ and $Y_\TT$, which are obtained by placing a point in each tile of $\SS$ and $\TT$ respectively, so that $X_\SS$ and $Y_\TT$ are BD-equivalent. Note that since the tiles have bounded diameter, the question whether $\SS$ and $\TT$ are BD-equivalent or not does not depend on the choice of the point sets.

The BD-equivalence relation for general discrete point sets was studied in \cite{DO1,DO2,Laczk,DSS}, where the main focus was on point sets that are BD-equivalent to a lattice. We refer to such point sets as \emph{uniformly spread}, following Laczkovich, who gave an important criterion to check whether a point set is uniformly spread or not. We say that a tiling is \emph{uniformly spread} if its corresponding point set is such. Note that an application of the Hall's marriage theorem shows that every two lattices of the same co-volume are BD-equivalent, see \cite{DO2} or \cite{HKW} for a proof.

Recall that a point set $X\subset\R^d$ is a \emph{Delone set} if there exists $R,r>0$ so that $X$ intersects every ball of radius $R$ and every ball of radius $r$ contains at most one point of $X$. There are two fundamental families of Delone sets, which are on one hand non-periodic and on the other hand are often of finite local complexity and are repetitive, and hence are important objects of study in the theory of mathematical quasicrystals. One is the family of cut-and-project sets and the other is point sets that arise from substitution tilings, see \cite{BaakeGrimm} for further details. The question of which cut-and-project sets are uniformly spread was studied in \cite{HKW}, where in \cite{HK,HKK} the BD-equivalence of cut-and-project sets is linked to the notion of \emph{bounded remainder sets}. For substitution tilings (see \S \ref{sec:Background_and_definitions}), sufficient conditions to be uniformly spread were given in  \cite{ACG} and \cite{Solomon11}, which were further improved for tilings by tiles that are biLipschitz-homeomorphic to closed balls in \cite{Solomon14}, see Theorem \ref{thm:BD-criterion(Sol14)} below. In \cite{Smilansky-Solomon} multiscale substitution tilings are defined and a proof that they are never uniformly spread is given.

Questions regarding BD-equivalence and non-equivalence between two Delone sets, none of which is a lattice, were considered recently in \cite{FSS}. Using similar ideas to those of Laczkovich, a sufficient condition for BD-non-equivalence has been established. 

This paper studies the BD-equivalence relation on the tiling space $\X_\varrho$ of a fixed primitive substitution rule $\varrho$ in $\R^d$. In particular we are interested in the quantity $\absolute{\BD(\X_\varrho)}$, which is the cardinality of the quotient set $\X_\varrho/_{\bd}$. In particular, we show here for the first time that this cardinality can be greater than one. 

We denote by $\cont$ the cardinality of $\R$. Let $\varrho$ be a primitive substitution rule on the prototiles $\{T_1,\ldots,T_n\}$ in $\R^d$, with substitution matrix $M_\varrho$, whose eigenvalues are $\lambda_1>\absolute{\lambda_2}\ge\ldots\ge\absolute{\lambda_n}$, see \S \ref{sec:Background_and_definitions}. 
For a legal patch $P$
, we denote by $\vv(P)\in\R^n$ the vector whose $i$'th coordinate is the number of tiles of type $i$ in $P$. The notation $\bf{1}$ stands for the vector all of whose coordinates are equal to $1$, and $W^\perp$ denotes the orthogonal complement of a subspace $W$, with respect to the standard inner product in $\C^n$. When $dim(W)=1$ we denote by $\vv^\perp$ the orthogonal complement of $\span{\vv}$. 
Our main results are:


\begin{thm}\label{thm:main_result_general}
	Let $\varrho$ be a primitive substitution rule in $\R^d$ and let $t\ge 2$ be the minimal index for which $\lambda_t$ has an eigenvector whose sum of coordinates is non-zero. Assume that $\absolute{\lambda_t} > \lambda_1^{\frac{d-1}{d}}$ and that there exist two legal patches $P,Q$, such that 
	\begin{enumerate}
		\item 
		$\supp{P}$ and $\supp{Q}$ differ by a translation.\label{item:main_thm_1}
		\item
		$\vv(P)-\vv(Q)\notin \vv_t^\perp$, where $\vv_t$ is an eigenvector of $M_\varrho$ in $\bf{1}^\perp$, whose eigenvalue is equal in modulus to $\absolute{\lambda_t}$.    \label{item:main_thm_2}
	\end{enumerate} 
	Then $\absolute{\BD(\X_\varrho)} = \cont$. 
\end{thm}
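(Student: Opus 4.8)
The plan is to construct, for every $\alpha$ in the Cantor space $\{0,1\}^{\N}$, a tiling $\SS_\alpha\in\X_\varrho$ in such a way that $\SS_\alpha\bd\SS_\beta$ if and only if $\alpha$ and $\beta$ agree in all but finitely many coordinates. Since $\{0,1\}^{\N}$ modulo cofinite agreement has cardinality $\cont$, while $\absolute{\X_\varrho}\le\cont$ holds trivially, this yields $\absolute{\BD(\X_\varrho)}=\cont$. The mechanism is that the legal pair $P,Q$ acts as a ``switch'': because $\supp{P}$ and $\supp{Q}$ are translates (hypothesis \eqref{item:main_thm_1}), one may in principle exchange a copy of $\varrho^{\ell}(P)$ sitting in a tiling for a copy of $\varrho^{\ell}(Q)$ on the same footprint; hypothesis \eqref{item:main_thm_2} makes such an exchange ``visible'' at arbitrarily large scales, and the assumption $\absolute{\lambda_t}>\lambda_1^{\frac{d-1}{d}}$ makes it too large to be undone by a bounded displacement. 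We flip the $i$-th switch exactly when $\alpha_i=1$. There are then two things to prove: (a) if $\alpha$ and $\beta$ agree cofinitely then $\SS_\alpha\bd\SS_\beta$; and (b) if they differ at infinitely many coordinates then $\SS_\alpha$ and $\SS_\beta$ are not BD-equivalent.

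The quantitative heart is a spectral estimate for a single switch. Replacing $\varrho^{\ell}(P)$ by $\varrho^{\ell}(Q)$ on the same footprint changes the tile-count vector of the affected region by $M_\varrho^{\ell}\ww$, where $\ww\df\vv(P)-\vv(Q)$, and hence changes the \emph{number} of tiles there by $\mathbf 1^{\top}M_\varrho^{\ell}\ww$. As $\supp{P}$ and $\supp{Q}$ are translates, $P$ and $Q$ have equal volume, so $\ww$ is orthogonal to the left Perron eigenvector of $M_\varrho$ and $\mathbf 1^{\top}M_\varrho^{\ell}\ww$ carries no $\lambda_1^{\ell}$ term. By the minimality in the definition of $t$, every eigenvalue of $M_\varrho$ of modulus in $[\absolute{\lambda_t},\lambda_1)$ that comes from an index below $t$ has all its eigenvectors inside $\mathbf 1^{\perp}$ and therefore contributes nothing to $\mathbf 1^{\top}(\cdot)$; and hypothesis \eqref{item:main_thm_2}, $\ww\notin\vv_t^{\perp}$, is exactly what certifies that the surviving leading term, of order $\absolute{\lambda_t}^{\ell}$, is nonzero (possibly after restricting the admissible depths $\ell$ to a positive-density set, to rule out exceptional cancellations among modes of equal modulus). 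Thus the tile count of the region changes by a quantity $\asymp\absolute{\lambda_t}^{\ell}$. On the other hand $\varrho^{\ell}(P)$ is supported on a set of diameter $\asymp\lambda_1^{\ell/d}$, so it lies inside a box $B_{\ell}$ of sidelength $\asymp\lambda_1^{\ell/d}$ with $\vol(\partial B_{\ell})\asymp\lambda_1^{\ell\frac{d-1}{d}}$, and all tiles meeting $\partial B_{\ell}$ are unaffected by the switch. Consequently the change in the tile count of $B_{\ell}$ divided by $\vol(\partial B_{\ell})$ is $\gtrsim\big(\absolute{\lambda_t}\big/\lambda_1^{\frac{d-1}{d}}\big)^{\ell}\to\infty$. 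Combined with the elementary collar bound — a bijection of displacement at most $C$ between two Delone sets alters the count of any region by $O_C\!\left(\vol(\partial(\text{region}))\right)$ — this is precisely the hypothesis of the BD-non-equivalence criterion of \cite{FSS} (it is the mirror image of the circle of ideas around Theorem \ref{thm:BD-criterion(Sol14)}).

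For the construction, fix $\SS_\bullet\in\X_\varrho$ carrying a full supertile hierarchy of $\R^d$, e.g. a fixed point of a power of $\varrho$; it is repetitive, so a copy of $\varrho^{\ell}(P)$ occurs in every sufficiently high supertile — in fact, since $P$ occurs in $\varrho^{N_0}(T_j)$ for all $j$, a copy of $\varrho^{k-c}(P)$ occurs well inside every level-$k$ supertile once $k$ is large, for a fixed constant $c$. Choose scales $k_1<k_2<\cdots$ and, recursively, sites $x_1,x_2,\dots\in\R^d$ so that near $x_i$ there is such a copy of $\varrho^{k_i-c}(P)$, and so that $\norm{x_i-x_j}\gg\lambda_1^{\max(k_i,k_j)/d}$ for $i\ne j$; this is possible because $\SS_\bullet$ fills $\R^d$. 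Let $\SS_\alpha$ be $\SS_\bullet$ with the exchange of $\varrho^{k_i-c}(P)$ for $\varrho^{k_i-c}(Q)$ carried out at exactly those sites $x_i$ with $\alpha_i=1$. That $\SS_\alpha\in\X_\varrho$, i.e. that every finite patch of $\SS_\alpha$ is legal, is the one point requiring care: patches avoiding all switch sites occur in $\SS_\bullet$, while for a patch meeting a switch site one uses the legality of $Q$ together with hypothesis \eqref{item:main_thm_1} to see that, within a suitable common legal context, $\varrho^{k_i-c}(P)$ may be swapped for $\varrho^{k_i-c}(Q)$ without destroying legality.

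Granting this, the two directions follow. If $\alpha$ and $\beta$ agree outside finitely many indices, then $\SS_\alpha$ and $\SS_\beta$ coincide outside finitely many bounded regions, on each of which one has $\varrho^{k_i-c}(P)$ where the other has $\varrho^{k_i-c}(Q)$ on the same footprint; re-routing the resulting finite surplus (or deficit) of tiles out to infinity along disjoint corridors produces a bijection of the associated point sets with bounded displacement, so $\SS_\alpha\bd\SS_\beta$. If instead $\alpha_i\ne\beta_i$ for infinitely many $i$, fix such an $i$ and let $B_i$ be a box of sidelength $\asymp\lambda_1^{k_i/d}$ around $x_i$: by the spacing of the sites, $B_i$ contains the $i$-th switch region and meets no other, so $\SS_\alpha$ and $\SS_\beta$ differ inside $B_i$ exactly by that one switch, whence $\absolute{\#(X_{\SS_\alpha}\cap B_i)-\#(X_{\SS_\beta}\cap B_i)}\asymp\absolute{\lambda_t}^{k_i-c}$ while $\vol(\partial B_i)\asymp\lambda_1^{k_i\frac{d-1}{d}}$; since this ratio tends to $\infty$ along those $i$'s, $\SS_\alpha$ and $\SS_\beta$ are not BD-equivalent. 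Hence $\alpha\mapsto[\SS_\alpha]_{\bd}$ descends to an injection on $\{0,1\}^{\N}$ modulo cofinite agreement, and $\absolute{\BD(\X_\varrho)}=\cont$. The main obstacle is the spectral estimate of the second paragraph — teasing out from hypotheses \eqref{item:main_thm_1} and \eqref{item:main_thm_2} that a depth-$\ell$ switch perturbs the tile count by a nonzero quantity of order exactly $\absolute{\lambda_t}^{\ell}$ — together with the verification in the third paragraph that the switched tilings remain in $\X_\varrho$.
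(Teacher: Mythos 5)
Your overall architecture (encode $\{P,Q\}^{\N}$ into tilings, apply the counting criterion of \cite{FSS} to boxes around the regions where two codes disagree, and pass to representatives of the cofinite-agreement relation) is the same as the paper's, and your spectral estimate for a single switch is exactly the paper's Lemma \ref{lem:number_of_tiles_in_large_P-Q} (your caveat about eigenvalues of equal modulus is, if anything, more careful than the paper's). But your construction of the tilings $\SS_\alpha$ is genuinely different from the paper's, and it is there that the proof has a real gap: you perform surgery on a fixed background tiling $\SS_\bullet$, replacing $\varrho^{k_i-c}(P)$ by $\varrho^{k_i-c}(Q)$ on the same footprint at infinitely many separated sites, and you assert that legality of the result follows from the legality of $Q$ ``within a suitable common legal context.'' This does not follow from the hypotheses. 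A patch of $\SS_\alpha$ straddling the boundary of a switch region consists of $\varrho^{k_i-c}(Q)$ glued to the context that legally surrounded $\varrho^{k_i-c}(P)$ in $\SS_\bullet$; membership in $\X_\varrho$ requires this hybrid patch to occur inside some $\varrho^m(T_j)$, and neither hypothesis \eqref{item:main_thm_1} (equal supports up to translation) nor the separate legality of $P$ and $Q$ gives you that. Legality is a global hierarchical constraint, not a local tiling constraint, so ``same footprint, both legal'' does not license an in-place exchange. This is precisely the difficulty the paper's construction is built to avoid.

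The paper instead builds each tiling as an increasing union of nested patches $\PPP_{\sigma^i}^{(k_i)}$, each a \emph{translated copy of a whole supertile} $\varrho^{k_i}(P)$ or $\varrho^{k_i}(Q)$ (hence automatically legal), with the type at level $i$ dictated by the $i$-th letter; the nesting is possible because, by primitivity (Lemma \ref{lem:xx(P)}), $\varrho^{a}(P)$ and $\varrho^{a}(Q)$ each contain interior copies of \emph{both} $P$ and $Q$. Every finite patch of the limit tiling is then a subpatch of some $\PPP_{\sigma^i}^{(k_i)}$, so membership in $\X_\varrho$ is free. The price is that the level-$i$ patches for two different codes no longer share a footprint --- their supports differ by a translation whose size must be controlled (property \eqref{item:g_distance_of_center_from_0} of Proposition \ref{prop:g_main_step} and the choice of $h$ in \eqref{eq:h_and_k_i}) so that the resulting symmetric-difference error $O(\lambda_1^{k_{i_m-1}}\mu_{d-1}(\partial A_m))$ is dominated by the main term $c_0\absolute{\lambda_t}^{k_{i_m}}$. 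Your same-footprint surgery would make that bookkeeping unnecessary, which is exactly why it is too good to be true without an additional interchangeability hypothesis. Separately, your direction (a) (cofinitely agreeing codes give BD-equivalent tilings) is not needed for the conclusion --- choosing one representative per cofinite-agreement class already yields $\cont$ pairwise non-equivalent tilings --- so even if you repair the construction you can drop that step.
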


Observe that every uniformly discrete set in $\R^d$, with separation constant $\delta>0$, is BD-equivalent to a subset of the lattice $\frac{\delta}{2}\Z^d$, hence the upper bound $\absolute{\BD(\X_\varrho)}\le\cont$ is trivial.  
We also remark that the primitivity assumption implies that the space $\X_\varrho$ is minimal with respect to the action of $\R^d$ by translations, see \cite{Q}. In particular, those continuously many tilings that we find, which are pairwise BD-non-equivalent, belong to the $\R^d$-orbit closure of any one of the tilings in space. 

\begin{cor}\label{cor:main_result_in_d=2}
	There exists a primitive substitution rule $\varrho$ on a set of two prototiles in the plane such that $\absolute{\BD(\X_\varrho)} = \cont$.	
\end{cor}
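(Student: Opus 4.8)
The plan is to apply Theorem~\ref{thm:main_result_general} to an explicit primitive substitution $\varrho$ on two rectangular prototiles in $\R^2$, which reduces everything to designing $\varrho$ and checking the (now finitely verifiable) hypotheses. Since $d=n=2$, the threshold $\lambda_1^{(d-1)/d}$ equals $\sqrt{\lambda_1}$, the index $t$ can only be $2$, and $\mathbf 1^\perp\subset\R^2$ is the line $\span{(1,-1)}$; so any non-zero eigenvector $\vv_2$ of $M_\varrho$ lying in $\mathbf 1^\perp$ is a multiple of $(1,-1)$ and $\vv_2^\perp=\span{(1,1)}$. Writing $\alpha_i$ for the area of $T_i$ and $\Delta=\vv(P)-\vv(Q)$, if $\supp P$ and $\supp Q$ differ by a translation then $\alpha_1\Delta_1+\alpha_2\Delta_2=0$ with $\alpha_1,\alpha_2>0$, so $\Delta\in\span{(1,1)}$ forces $\Delta=0$. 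Thus, for $n=2$, condition~\eqref{item:main_thm_2} amounts to: $P$ and $Q$ have distinct tile-type content, and it suffices to produce two legal patches with translation-equivalent supports that are not made up of the same prototiles.

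Granting this reduction, I would take $T_1=[0,1]^2$ (area $1$) and $T_2=[0,2]\times[0,1]$ (area $2$), linear expansion $\Lambda=\diag(2,3)$ (so $|\det\Lambda|=6$), and the substitution rule: $\varrho(T_1)$ is the rectangle $\Lambda T_1=[0,2]\times[0,3]$ cut into three horizontal translates of $T_2$, while $\varrho(T_2)$ is the rectangle $\Lambda T_2=[0,4]\times[0,3]$ cut into six translates of $T_1$ and three translates of $T_2$ by a fixed explicit dissection that places two of the six copies of $T_1$ along a common unit edge. This yields substitution matrix $M_\varrho=\bigl(\begin{smallmatrix}0&6\\3&3\end{smallmatrix}\bigr)$, whose square is strictly positive, so $\varrho$ is primitive; its row sums are $6$, so $\lambda_1=6$, and $\det M_\varrho=-18$ forces $\lambda_2=-3$, whence $|\lambda_2|=3>\sqrt 6=\lambda_1^{(d-1)/d}$. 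Moreover $\lambda_2=-3$ has the right eigenvector $(2,-1)$, of non-zero coordinate sum (so $t=2$ is the relevant index), and the left eigenvector $(1,-1)$, which lies in $\mathbf 1^\perp$; hence the vector $\vv_2$ of the theorem is a scalar multiple of $(1,-1)$ and $\vv_2^\perp=\span{(1,1)}$. Finally, let $Q$ be a single copy of $T_2$ and $P$ the pair of edge-adjacent copies of $T_1$ occurring inside $\varrho(T_2)$: both are legal patches, their supports are translates (both $2\times1$ rectangles), and $\vv(P)-\vv(Q)=(2,0)-(0,1)=(2,-1)\notin\span{(1,1)}$. Consequently all hypotheses of Theorem~\ref{thm:main_result_general} hold and $|\BD(\X_\varrho)|=\cont$.

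The only point requiring genuine care is the geometric design step: one must fix explicit dissections of $\Lambda T_1$ and $\Lambda T_2$ into prototile translates that simultaneously (i) realise the prescribed matrix $M_\varrho$, (ii) actually tile the two rectangles, and (iii) contain two edge-adjacent copies of $T_1$ inside $\varrho(T_2)$, so as to produce the witness patch $P$. I would present these dissections via a figure together with a list of tile coordinates; once they are fixed, primitivity, the eigenvalue computation, the identification of $t$ and of the line $\vv_2^\perp$, and the verification of conditions~\eqref{item:main_thm_1}--\eqref{item:main_thm_2} are all immediate, and the corollary follows at once from Theorem~\ref{thm:main_result_general}.
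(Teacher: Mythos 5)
Your overall strategy --- exhibit an explicit primitive two-prototile substitution in the plane and feed it into Theorem \ref{thm:main_result_general} --- is exactly the paper's, and your first-paragraph reduction of condition \eqref{item:main_thm_2} to ``$\vv(P)\neq\vv(Q)$'' in the case $n=2$ is fine. The gap is in the concrete example: you expand by the anisotropic linear map $\mathrm{diag}(2,3)$, whereas a substitution rule in this paper (Definition \ref{def:SubRule}) inflates by a single scalar $\xi>1$, with $\supp{\varrho(T_i)}=\xi T_i$ and $\lambda_1=\xi^d$. There is no scalar $\xi$ with $\xi T_i=\mathrm{diag}(2,3)T_i$, so Theorem \ref{thm:main_result_general} does not apply to your $\varrho$. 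This is not a cosmetic restriction: the hypothesis $\absolute{\lambda_t}>\lambda_1^{(d-1)/d}$ encodes ``the discrepancy $\absolute{\lambda_t}^k$ beats the surface area of a level-$k$ supertile,'' and the identification of that surface area with $\xi^{k(d-1)}=\lambda_1^{k(d-1)/d}$ (used in \eqref{eq:g_boundary_of_Am} and in property \eqref{item:g_distance_of_center_from_0} of Proposition \ref{prop:g_main_step}) holds only for a homothety.

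Tracing your example through the argument shows it genuinely fails, not merely that it falls outside the framework. The supports of your level-$k$ supertiles are the rectangles $\mathrm{diag}(2^k,3^k)T_i$, whose perimeters grow like $3^k$, while
\[
\absolute{\#\varrho^k(P)-\#\varrho^k(Q)}=\absolute{\inpro{\mathbf 1}{(-3)^k(2,-1)^T}}=3^k .
\]
Hence the ratio in Corollary \ref{cor:from_Dirk+Yotam_for_tilings} stays bounded and the criterion yields no BD-non-equivalence; the inequality $3>\sqrt{6}$ you verify compares the discrepancy to the wrong quantity. To repair this you must use a scalar inflation. The paper does so with the same two prototiles (a $1\times1$ square and a $2\times1$ rectangle) but with $\xi=3$ and substitution matrix $\binom{7\ 2}{1\ 8}$ (Figure \ref{fig:subs_rule}), giving $\lambda_1=9$ and $\lambda_2=6>3=\lambda_1^{1/2}$, and takes $P=\varrho(T_2)$ and $Q$ a pair of adjacent copies of $\varrho(T_1)$; with such a choice your remaining verifications (primitivity, $t=2$, conditions \eqref{item:main_thm_1}--\eqref{item:main_thm_2}) go through as you describe.
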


This corollary strengthen a similar result in the context of mixed substitution that was obtained in \cite{FSS}, that is where more than one substitution rule on the prototiles is allowed. 
In view of Theorem \ref{thm:BD-criterion(Sol14)} below, we also conjecture the following.

\begin{conj}\label{conj:1}
	Let $t\ge 2$ be as in Theorem \ref{thm:main_result_general}, and assume that $\absolute{\lambda_t} > \lambda_1^{\frac{d-1}{d}}$, then $\absolute{\BD(\X_\varrho)} = \cont$.
\end{conj}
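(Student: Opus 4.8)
The plan is to deduce Conjecture \ref{conj:1} from Theorem \ref{thm:main_result_general} by showing that, under the spectral hypothesis $\absolute{\lambda_t}>\lambda_1^{\frac{d-1}{d}}$ alone, the combinatorial conditions \eqref{item:main_thm_1} and \eqref{item:main_thm_2} on a pair of legal patches are \emph{automatically} met. Concretely, I would isolate a \emph{Patch Existence Lemma}: for every primitive $\varrho$ with $t$ as in Theorem \ref{thm:main_result_general} and $\absolute{\lambda_t}>\lambda_1^{\frac{d-1}{d}}$, there exist legal patches $P,Q$ whose supports differ by a translation and with $\vv(P)-\vv(Q)\notin\vv_t^\perp$. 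Granting this lemma, Theorem \ref{thm:main_result_general} applies verbatim and yields $\absolute{\BD(\X_\varrho)}=\cont$, so all the genuine work is in manufacturing one such pair of patches out of spectral data.

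First I would record the bookkeeping that ties a local patch to the eigenvalue $\lambda_t$. Expanding $\vv(P)$ in the eigenbasis of $M_\varrho$, applying $\varrho^k$ scales the $\lambda_t$-eigencomponent of $\vv(P)-\vv(Q)$ by $\lambda_t^k$, while the supports of $\varrho^k(P)$ and $\varrho^k(Q)$ stay congruent, of linear size on the order of $\lambda_1^{k/d}$ and boundary on the order of $\lambda_1^{k(d-1)/d}$. Condition \eqref{item:main_thm_2} is precisely the requirement that this eigencomponent be non-zero. Thus a single seed pair feeds, through substitution, an entire family of congruent-support patch pairs whose count-difference in the $\vv_t$-direction grows like $\absolute{\lambda_t}^k$, that is, strictly faster than the boundary term exactly because $\absolute{\lambda_t}>\lambda_1^{\frac{d-1}{d}}$. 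This amplification is the engine behind Theorem \ref{thm:main_result_general}, and it shows that the only ingredient missing for the conjecture is the seed itself.

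To produce the seed I would argue by contradiction. Suppose no legal patches with congruent support satisfy $\vv(P)-\vv(Q)\notin\vv_t^\perp$; equivalently, the functional $\vv(\cdot)\cdot\vv_t$ takes a common value on any two legal patches occupying translation-equivalent regions. Then the $\vv_t$-component of the count function of every tiling in $\X_\varrho$ is determined by the \emph{shape} of the region up to a contribution supported on a bounded collar of its boundary. A summation-by-parts (discrete divergence) estimate over a nested sequence of level-$k$ supertiles would then force this $\vv_t$-discrepancy to be $O(\lambda_1^{k(d-1)/d})$, i.e. a pure surface term. On the other hand, the deviation theory for primitive substitutions underlying Theorem \ref{thm:BD-criterion(Sol14)} should supply a matching \emph{lower} bound of order $\absolute{\lambda_t}^k$ for the $\vv_t$-mode, since $t$ was chosen so that $\lambda_t$ is the dominant sub-Perron eigenvalue whose eigendata is not annihilated by the relevant pairing. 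As the ratio $\absolute{\lambda_t}^k/\lambda_1^{k(d-1)/d}$ tends to infinity, the two bounds are incompatible, forcing the seed patches to exist.

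The main obstacle is exactly the lower bound invoked in the last step. The deviation theory delivers sharp \emph{upper} bounds and typical growth rates, but converting the spectral inequality $\absolute{\lambda_t}>\lambda_1^{\frac{d-1}{d}}$ into a genuine \emph{pointwise lower} bound on the $\vv_t$-discrepancy --- one realized by a comparison of two \emph{finite} congruent patches rather than only asymptotically --- requires ruling out delicate cancellations in the interaction between the zero-sum eigenvector $\vv_t$ and the geometry of the supertiles. In Theorem \ref{thm:main_result_general} this non-degeneracy is simply hypothesized through the explicit patches $P,Q$; removing that crutch is the whole difficulty of the conjecture. I would therefore expect the proof to demand either a refinement of the deviation estimates that tracks individual eigenmodes all the way down to finite patches, or a separate combinatorial argument showing that primitivity together with $\absolute{\lambda_t}>\lambda_1^{\frac{d-1}{d}}$ forces local $\vv_t$-variability directly.
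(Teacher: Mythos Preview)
The statement you are attempting is stated in the paper as a \emph{conjecture}; the paper gives no proof of it. So there is no ``paper's own proof'' to compare against, and your proposal should be read as a research plan toward an open problem rather than as a reconstruction.

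Your reduction is the right one: Theorem~\ref{thm:main_result_general} does all the work once a single pair of legal patches $P,Q$ with congruent support and $\vv(P)-\vv(Q)\notin\vv_t^\perp$ is produced, so the entire content of the conjecture is your Patch Existence Lemma. You also correctly isolate the obstruction: the spectral hypothesis $\absolute{\lambda_t}>\lambda_1^{(d-1)/d}$ controls \emph{asymptotic} deviation rates, whereas what is needed is a \emph{finite} witness at the level of two concrete patches.

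That said, your contradiction sketch has two genuine gaps, not one. The second, which you flag yourself, is the pointwise lower bound of order $\absolute{\lambda_t}^k$. Note that the deviation estimates behind Theorem~\ref{thm:BD-criterion(Sol14)}(II) bound $\bigl|\#[A]^{\TT}-\alpha\,\vol(A)\bigr|$ for a \emph{single} tiling against the volume-predicted count; they do not directly separate two congruent-support patches from each other. Both patches could deviate by order $\absolute{\lambda_t}^k$ in the \emph{same} direction, leaving their mutual difference small, so the lower bound you invoke is not an immediate corollary of the existing theory.

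The first gap is earlier and you pass over it: the implication ``$\langle\vv_t,\vv(\cdot)\rangle$ depends only on the shape of the support $\Longrightarrow$ it is $O\bigl(\lambda_1^{k(d-1)/d}\bigr)$ on level-$k$ supertiles'' is not justified by a summation-by-parts alone. An additive, translation-invariant, shape-determined functional on patches need not be a boundary term; all that shape-determinacy gives you is a constraint on pairs of tilings of the \emph{same} region, and if such coincidences are rare (as they may be when the prototiles have generic geometry) the constraint can be nearly vacuous. In particular, nothing prevents $\langle\vv_t,\vv(\varrho^k(T_i))\rangle$ from growing at its natural spectral rate even under your hypothesis, which would collapse the intended contradiction.

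In short: the strategy is sound and your diagnosis of where the difficulty lies is accurate, but neither of the two quantitative steps in the contradiction argument is established, and the paper's decision to leave this as a conjecture reflects exactly that.
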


\begin{remark}\label{remark:dichotomy}
	Conjecture \ref{conj:1} says that other than the case of equality, where $\absolute{\lambda_t} = \lambda_1^{\frac{d-1}{d}}$, and under certain regularity assumptions of the tiles, the following dichotomy holds: 
	\begin{itemize}
		\item
		$\absolute{\lambda_t} < \lambda_1^{\frac{d-1}{d}} \Longrightarrow \absolute{\BD(\X_\varrho)} = 1$.
		\item
		$\absolute{\lambda_t} > \lambda_1^{\frac{d-1}{d}} \Longrightarrow \absolute{\BD(\X_\varrho)} = \cont$.
	\end{itemize}
	In view of Theorem \ref{thm:BD-criterion(Sol14)} below, for tiles that are biLipschitz homeomorphic to closed balls, the former implication is clear, since in this case every tiling in $\X_\varrho$ is uniformly spread.  
	We also remark that in the case of equality both implications fail.  Indeed, as shown in \cite{FSS}, there are examples where equality holds and every $\TT\in\X_\varrho$ is uniformly spread and there are such examples where every $\TT\in\X_\varrho$ is not uniformly spread. In the latter, one can repeat the arguments of our Lemma \ref{lem:number_of_tiles_in_R^k-S^k} and Corollary \ref{cor:existence_of_two_non_BD_tilings} below, with the example in \cite{FSS}, showing that $\absolute{\BD(\X_\varrho)} > 1$ in a case of equality.
\end{remark}

\ignore{
\begin{remark}\label{remark:top_conj_not_imply_BD}
	Let $(\X,\R^d)$ and $(\Y,\R^d)$ be two topological spaces of patterns that are endowed with an $\R^d$ action. 
	Corollary \ref{cor:main_result_in_d=2} in particular implies that if $\X$ and $\Y$ are homeomorphic, or topologically conjugate, or MLD equivalent, or even equal, it is not enough to imply that every $X\in \X$ and $Y\in \Y$ are BD-equivalent. It is easy to construct examples that show that the implication in the other direction fails as well. Namely, examples where every $X\in \X$ and $Y\in \Y$ are BD-equivalent and $\X$ is not even homeomorphic to $\Y$. The notions that are mentioned in this remark can be found in \cite{BaakeGrimm}.
\end{remark}
}

The study of BD-equivalence is often linked with the \emph{bi-Lipschitz equivalence relation}, in which Delone sets are equivalent if there exists a bi-Lipschitz bijection between them. It is not hard to verify that BD-equivalence of Delone sets implies bi-Lipschitz equivalence. It was shown in \cite{Magazinov} that the cardinality of the set of Delone sets in $\R^d$ modulo bi-Lipschitz equivalence is $\cont$. Nonetheless, as all point sets that arise from primitive substitution tilings are bi-Lipschitz equivalent to a lattice (see \cite{Solomon11}), all the distinct BD-equivalence class representatives that we find here belong to the same bi-Lipschitz equivalence class.

\subsection*{Acknowledgments} 
The author thanks Jeremias Epperlein, Dirk Frettl\"oh, Scott Schmieding, Yotam Smilansky and Barak Weiss for useful discussions and remarks. 

\section{Background and definitions}\label{sec:Background_and_definitions}
We use bold figures to denote vectors in $\C^n$. The notation $\inpro{\cdot}{\cdot}$ stands for the standard inner product in $\C^n$, $\norm{\vv}:=\sqrt{\inpro{\vv}{\vv}}$, and $M^T$ (resp. $\uu^T$) denotes the transpose of a matrix $M$ (resp. vector $\uu$). This chapter contains preliminaries on tilings that are needed for our discussion. For further reading see \cite{BaakeGrimm}. 

A \emph{tile} $T$ is a compact subset of $\R^d$. A large variety of additional regularity assumptions on tiles appears in the literature. 
We assume here that $\mathcal{H}^{d-1}(\partial T)\in(0,\infty)$ for every tile $T$, where $\mathcal{H}^s(A)$ stands for the $s$-dimensional Hausdorff measure of the set $A\subset \R^d$, see \cite[Chap. 4]{Mattila}. 

A \emph{tiling} of a set $S\subset\R^d$ is a collection of tiles, with pairwise disjoint interiors, such that their union is equal to $S$. A tiling $P$ of a bounded set $B\subset\R^d$ is called a \emph{patch}, and we denote the set $B$, which is the \emph{support of $P$}, by $\supp{P}$. 
In particular, by our assumption on the tiles, for any scaling constant $t>0$ and any patch $P$ one has 
\begin{equation}\label{eq:Hausdorff_scaling}
\mathcal{H}^{d-1}(t\cdot\partial \supp{P}) = t^{d-1}\mathcal{H}^{d-1}(\partial \supp{P}) \in (0,\infty), 
\end{equation}
see \cite[p. 57]{Mattila}. This assumption is used in \eqref{eq:g_boundary_of_Am}.  

Tiles are called \emph{translation equivalent} if they differ by a translation and representatives of this equivalence relation are called \emph{prototiles}. The set of prototiles is denoted by $\FF$, and $\FF^*$ is the set of representatives of patches. Finally, given a tiling $\TT$ of $\R^d$, a bounded set $B\subset\R^d$ and a finite set $S$, we denote by $\# S := \text{ the cardinality of } S$ and by
\begin{equation}\label{eq:[B]^T_and_no.S}
[B]^\TT := \text{ the patch of } \TT \text{ that consists of all tiles that intersect } B.
\end{equation}

\subsection{Substitution tilings}\label{subsec:substitution_tilings}
Let $\xi>1$ and let $\mathcal{F}=\{T_1,\ldots,T_n\}$ be a set of tiles in $\R^d$. 

\begin{definition}\label{def:SubRule}
	A \emph{substitution rule} on $\FF$ is a fixed way to tile each one of the elements of $\xi\FF$ by the tiles in $\FF$. By applying $\varrho$ on a tile $T_i\in\FF$ we mean first scaling $T_i$ by $\xi$ and then substitute $\xi T_i$ by its fixed given tiling. Formally, it is a mapping $\varrho: \mathcal{F} \to \mathcal{F}^*$ satisfying $\supp{\xi T_i}=\supp{\varrho(T_i)}$ for every $i$. The number $\xi$ is the \emph{inflation factor} of $\varrho$.
\end{definition}

The function $\varrho$ can naturally be extended to $\FF^*$, and to tilings by tiles of $\FF$, by applying $\varrho$ separately to each tile. 

\begin{definition}\label{def:SubsTiling}
	Given a substitution rule $\varrho$ on $\mathcal{F}$ in $\R^d$, consider the patches: 
	\[\mathscr{L}_{\varrho}=\left\{\varrho^m(T) \: : \: m\in\N \: , \: T\in\mathcal{F} \right\}. \] 
	A patch is called \emph{legal} if it is a sub-patch of an element of $\mathscr{L}_\varrho$. The \emph{tiling space} $\X_{\varrho}$ is the collection of tilings of $\R^d$ with the property that every patch in them is legal. A tiling $\TT\in \X_{\varrho}$ is called a \emph{substitution tiling} that corresponds to $\varrho$. 
\end{definition}


\begin{definition}\label{def:SubMatrix+Primitive}
	The \emph{substitution matrix} $M_{\varrho}=(a_{ij})$ of $\varrho$ is defined by 
	\[a_{ij} = \#\left\{\text{tiles of type } i \text{ in } \varrho(T_j) \right\}.\] 
	$\varrho$ is called \emph{primitive} if $M_{\varrho}$ is a primitive matrix. Namely, if there exists an $m\in \N$ such that all entries of $M_{\varrho}^m$ are positive. 
\end{definition}   

\subsection{Further notations and properties} \label{subsec:Notations}

We assume throughout that $\varrho$ is primitive. Perron-Frobenius theorem then implies that the eigenvalues of $M_{\varrho}$ can be ordered such that  $\lambda_1>\absolute{\lambda_2}\ge \ldots \ge \absolute{\lambda_n}$. We denote by $(\vv_1,\ldots,\vv_n)$ a Jordan basis of $M_\varrho$, $\vv_i$ corresponds to $\lambda_i$. 

Given a patch $P$ in a tiling $\TT \in\X_{\varrho}$, let $\vv(P)\in\Z^n$ denote the vector whose $i$'th coordinate is the number of tiles of type $i$ in $P$. Denote by $\ee_i$ the $i$'th element of the standard basis of $\R^n$, so $e_i=\vv(T_i)$ and for every $m\in\N$ one has $M_{\varrho}^m \ee_i = \vv\left(\varrho^m(T_i)\right)$. 

Observe that $\lambda_1=\xi^d$, and that $M_{\varrho}^T\uu_1=\lambda_1\uu_1$, where $\uu_1$ is the vector whose $i$'th coordinate is $\vol(T_i)$. This implies that for every patch $P$ we have 
\begin{equation}\label{eq:inner_product_for_volume+counting}
\vol(\supp{P}) = \inpro{\uu_1}{\vv(P)} \qquad\text{ and }\qquad \# P = \inpro{\bf{1}}{\vv(P)},
\end{equation}
where ${\bf 1}:=(1,1,\ldots,1)^T$ and $\# F$ denotes the cardinality of a finite set $F$. 
  
\ignore{
	\begin{prop}\label{prop:TT^{(m)}_Def}\cite{Robinson}
		If $\varrho$ is a primitive substitution rule in $\R^d$, then $X_{\varrho}\neq\emptyset$ and for every $\TT\in X_{\varrho}$ and $m\in\N$ there exists a tiling $\TT^{(m)}\in X_{\varrho^{(m)}}$ that satisfies $\varrho^m(\TT^{(m)})=\TT$.   
	\end{prop} 
}

\subsection{Bounded displacement equivalence}\label{subsec:BD}
Given a tiling $\TT$ of $\R^d$ by tiles of bounded diameter, let $\Lambda_\TT\subset\R^d$ be a point set with a point in each tile of $\TT$ (taken with multiplicity in the case that a point $x\in T_1\cap T_2$ was chosen for tiles $T_1, T_2$ in $\TT$).
$\TT$ is called \emph{uniformly spread} if there is a BD-map $\phi: \Lambda_\TT \to \alpha\Z^d$, for some $\alpha>0$. Namely, a map that satisfies $\sup_{x\in\Lambda_\TT}\norm{x-\phi(x)}<\infty$.  
\begin{thm}\cite[Theorem 1.2]{Solomon14}\label{thm:BD-criterion(Sol14)}
	Let $\varrho$ be a primitive substitution rule on prototiles that are biLipschitz-homeomorphic to closed balls in $\R^d$, and let $\TT\in\X_\varrho$. 
	Let $t\ge 2$ be the minimal index for which the eigenvalue $\lambda_t$ has an eigenvector $\vv_t\notin\bf{1}^\perp$. 
	\begin{itemize}		
		\item[(I)]
		If $\absolute{\lambda_t}<\lambda_1^{\frac{d-1}{d}}$ then $\TT$ is uniformly spread. 
		\item[(II)]
		If $\absolute{\lambda_t}>\lambda_1^{\frac{d-1}{d}}$ then $\TT$ is not uniformly spread.
	\end{itemize} 
\end{thm}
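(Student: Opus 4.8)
The plan is to reduce the statement to \textbf{Laczkovich's discrepancy criterion} for uniform spread, which characterizes BD-equivalence to a lattice: a Delone set $X$ of density $\rho$ is uniformly spread if and only if there is a constant $C$ such that for every finite union of cells $A$ one has $\absolute{\#(X\cap A) - \rho\,\vol(A)} \le C\,\mathcal{H}^{d-1}(\partial A)$, and it fails to be uniformly spread when no such $C$ exists. Translating to the tiling, for a patch $P$ I set the \emph{discrepancy} $\operatorname{disc}(P) := \#P - \rho\,\vol(\supp{P})$, where $\rho$ is the uniform density of tiles in $\TT$. By \eqref{eq:inner_product_for_volume+counting} this is $\operatorname{disc}(P) = \inpro{\mathbf{1} - \rho\uu_1}{\vv(P)}$, so the whole problem becomes one of controlling the linear functional $\vv\mapsto\inpro{\mathbf{1} - \rho\uu_1}{\vv}$ along the count vectors $\vv(P)$ produced by the substitution, against the surface measure $\mathcal{H}^{d-1}(\partial\supp{P})$.

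The key computation is the discrepancy of a supertile $\varrho^m(T_i)$, whose count vector is $\vv(\varrho^m(T_i)) = M_\varrho^m\ee_i$. Expanding $\ee_i$ in the Jordan basis $(\vv_1,\ldots,\vv_n)$ and applying $M_\varrho^m$, one gets $\operatorname{disc}(\varrho^m(T_i)) = \sum_{j} c_j\lambda_j^m\inpro{\mathbf{1} - \rho\uu_1}{\vv_j}$, with polynomial-in-$m$ corrections from nontrivial Jordan blocks. The Perron term $j=1$ vanishes because $\mathbf{1}-\rho\uu_1\perp\vv_1$ by the definition of $\rho$; for $j\ge 2$ biorthogonality gives $\inpro{\uu_1}{\vv_j}=0$, so the surviving contributions are exactly the $c_j\lambda_j^m\inpro{\mathbf{1}}{\vv_j}$ with $\vv_j\notin\mathbf{1}^\perp$. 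By the definition of $t$, the leading such term is $j=t$, so $\operatorname{disc}(\varrho^m(T_i)) = \Theta(\absolute{\lambda_t}^m)$ for a suitable prototile and (when $\lambda_t$ is complex) along a subsequence of $m$ on which the oscillating factor stays away from $0$. On the other hand $\supp{\varrho^m(T_i)} = \xi^m T_i$, so by \eqref{eq:Hausdorff_scaling} its boundary measure is $\xi^{m(d-1)}\mathcal{H}^{d-1}(\partial T_i) = \lambda_1^{m(d-1)/d}\,\mathcal{H}^{d-1}(\partial T_i)$. Thus the decisive comparison is between $\absolute{\lambda_t}^m$ and $\lambda_1^{m(d-1)/d}$.

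For part (II), where $\absolute{\lambda_t} > \lambda_1^{\frac{d-1}{d}}$, the supertiles already witness the failure of the criterion: their discrepancy-to-boundary ratio grows like $\bigl(\absolute{\lambda_t}/\lambda_1^{\frac{d-1}{d}}\bigr)^m\to\infty$, so no constant $C$ can hold and $\TT$ is not uniformly spread. The only points needing care are that the coefficient of the $\lambda_t^m$ term is nonzero for some prototile — which holds because $\vv\mapsto\inpro{\mathbf{1}}{\vv}$ does not annihilate the $\lambda_t$-eigenspace by the choice of $t$ — and the oscillation for complex $\lambda_t$, handled by passing to a subsequence.

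Part (I), where $\absolute{\lambda_t} < \lambda_1^{\frac{d-1}{d}}$, is the main obstacle, since one must verify the upper bound for \emph{every} finite region $A$, not merely for supertiles. The plan is to use the hierarchical structure: peel $A$ from the outside in, filling its interior with the coarsest supertiles that fit, refining only the boundary shell to the next level, and iterating down to single tiles. This writes $\operatorname{disc}(A)$ as a sum of supertile discrepancies over all levels plus an error from tiles straddling $\partial A$. Here the hypothesis that prototiles are biLipschitz-homeomorphic to closed balls is essential: it guarantees that the number of level-$k$ supertiles meeting $\partial A$ is $\lesssim \mathcal{H}^{d-1}(\partial A)/\xi^{k(d-1)}$ and that the straddling-tile error is $\lesssim\mathcal{H}^{d-1}(\partial A)$, whence $\absolute{\operatorname{disc}(A)}\lesssim \mathcal{H}^{d-1}(\partial A)\sum_{k\ge 0}\bigl(\absolute{\lambda_t}/\xi^{d-1}\bigr)^k$ up to polynomial Jordan factors. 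Since $\xi^{d-1} = \lambda_1^{\frac{d-1}{d}}$, the ratio is $<1$, the geometric series converges, and Laczkovich's inequality holds with a uniform $C$, giving uniform spread. The delicate part is making the hierarchical decomposition and the surface-area count of boundary supertiles rigorous using only the ball-homeomorphism regularity, which is exactly where the assumption on the tiles enters.
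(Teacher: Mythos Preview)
The paper does not prove this theorem: it is quoted verbatim from \cite[Theorem 1.2]{Solomon14} as background material in \S\ref{subsec:BD}, with no argument supplied. There is therefore nothing in the present paper to compare your attempt against.

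That said, your sketch is the correct outline and matches the strategy of the cited source. The reduction to Laczkovich's discrepancy criterion, the eigen-expansion $\operatorname{disc}(\varrho^m(T_i)) = \inpro{\mathbf{1}-\rho\uu_1}{M_\varrho^m\ee_i}$ with the Perron term cancelling and the dominant surviving term governed by $\absolute{\lambda_t}^m$, and the comparison against $\mathcal{H}^{d-1}(\partial\,\supp{\varrho^m(T_i)}) \asymp \lambda_1^{m(d-1)/d}$ are exactly right. For (II) one small point you glossed over is that Laczkovich's criterion is phrased for finite unions of lattice unit cubes, not for supertile supports; one must replace $\supp{\varrho^m(T_i)}$ by its cube-cover (as done later in this very paper in the proof of Theorem~\ref{thm:main_result_general}), which changes both discrepancy and boundary by at most $O(\lambda_1^{m(d-1)/d})$ and so does not affect the divergence. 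For (I) your hierarchical peeling argument is the standard one, and you correctly identify that the biLipschitz-to-ball hypothesis is what makes the count of level-$k$ supertiles touching $\partial A$ behave like $\mathcal{H}^{d-1}(\partial A)/\xi^{k(d-1)}$; without some regularity of the tile boundaries this estimate can fail.
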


Given $x=(x_1,\ldots,x_d) \in \R^d$, following \cite{FSS}, we denote by $C(x)$ the axis-parallel unit cube $\bigtimes_{i=1}^d [x_i-\frac{1}{2},x_i+\frac{1}{2})$ centered at $x$. Denote by $\QQ_d$ the set $\left\{C(x) \mid x\in \Z^d \right\}$ of lattice centered unit cubes, and let $\QQ_d^*$ be the collection of all finite subsets of $\QQ_d$. 

\begin{thm}\cite[Theorem 1.1]{FSS}\label{thm:from_Dirk+Yotam}
	Let $\Lambda_1, \Lambda_2$ be two Delone sets in $\R^d$ and suppose that there is a sequence $(A_m)_{m\in\N}$ of sets $A_m\in \QQ_d^*$ for which
	\begin{equation}\label{eq:non_BD_condition}
	\lim_{m\to\infty} \frac{| \#(\Lambda_1 \cap A_m) - \#(\Lambda_2 \cap A_m) |}{\mu_{d-1}(\partial A_m)} = \infty.
	\end{equation}   
	Then there is no BD-map $\phi: \Lambda_1 \to \Lambda_2$.
\end{thm}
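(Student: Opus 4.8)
The plan is to argue by contraposition: assuming a BD-map exists, I will show that the discrepancy ratio appearing in \eqref{eq:non_BD_condition} is \emph{uniformly} bounded over all $A\in\QQ_d^*$, which directly contradicts the hypothesis that it tends to infinity along $(A_m)$. So suppose $\phi:\Lambda_1\to\Lambda_2$ is a bijection with $M:=\sup_{x\in\Lambda_1}\norm{x-\phi(x)}<\infty$. I fix an arbitrary $A\in\QQ_d^*$ and write $N_i:=\#(\Lambda_i\cap A)$, aiming for a bound $|N_1-N_2|\le K\,\mu_{d-1}(\partial A)$ with $K$ independent of $A$.

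The first step is a clean combinatorial identity obtained by exploiting the bijection $\phi$ to compare the two counts. Among the points of $\Lambda_1\cap A$, let $a$ be the number whose image lies in $A$ and $b$ the number whose image lies outside $A$, so $N_1=a+b$. Dually, among the points of $\Lambda_2\cap A$, exactly $a$ have their $\phi$-preimage inside $A$ (these are precisely the images counted by $a$), while $c$ of them have preimage outside $A$; thus $N_2=a+c$. Subtracting gives the relation $N_1-N_2=b-c$, so $|N_1-N_2|\le b+c$. Now $b$ and $c$ count points that $\phi$, respectively $\phi^{-1}$, sends across $\partial A$, and each such displacement has length at most $M$; hence each such point lies within distance $M$ of $\partial A$. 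Writing $(\partial A)^{+M}$ for the $M$-neighborhood of the boundary, I obtain $b\le\#\bigl(\Lambda_1\cap(\partial A)^{+M}\bigr)$ and $c\le\#\bigl(\Lambda_2\cap(\partial A)^{+M}\bigr)$.

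The remaining and most delicate step is to bound the number of Delone points in $(\partial A)^{+M}$ by a constant multiple of $\mu_{d-1}(\partial A)$, and this is where the lattice-cube structure encoded in $\QQ_d^*$ is essential. Since $A$ is a finite union of lattice-centered unit cubes, its boundary is a union of unit $(d-1)$-dimensional faces and $\mu_{d-1}(\partial A)$ equals the number of such faces. The $M$-neighborhood of a single unit face is contained in a box of volume $v_{d,M}:=2M(1+2M)^{d-1}$, so summing over faces gives $\vol\bigl((\partial A)^{+M}\bigr)\le v_{d,M}\,\mu_{d-1}(\partial A)$. On the other hand, the separation property of a Delone set $\Lambda_i$, say with separation constant $r_i>0$, makes the balls of radius $r_i/2$ around its points pairwise disjoint, yielding the packing bound $\#(\Lambda_i\cap S)\le\vol(S^{+r_i/2})/\vol(B_{r_i/2})$ for any bounded $S$. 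Applying this with $S=(\partial A)^{+M}$ and using the volume estimate for the slightly larger neighborhood $(\partial A)^{+(M+r_i/2)}$ produces $\#\bigl(\Lambda_i\cap(\partial A)^{+M}\bigr)\le K_i\,\mu_{d-1}(\partial A)$, where $K_i$ depends only on $d$, $M$ and $r_i$.

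Combining these estimates gives $|N_1-N_2|\le(K_1+K_2)\,\mu_{d-1}(\partial A)$ for \emph{every} $A\in\QQ_d^*$, with a constant independent of $A$. Consequently the ratio in \eqref{eq:non_BD_condition} is bounded above by $K_1+K_2$ along any sequence whatsoever, contradicting the assumption that it tends to $\infty$ along $(A_m)$; hence no BD-map $\phi:\Lambda_1\to\Lambda_2$ exists. The one place demanding genuine care is the passage from the $(d-1)$-measure of $\partial A$ to a point-count in its thickening: I must confirm that the overcounting occurring at shared faces, edges and corners of the cube complex only inflates the right-hand side, which is harmless since the argument only needs an upper bound.
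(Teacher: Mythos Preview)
The paper does not contain a proof of this theorem: it is quoted from \cite{FSS} (as Theorem~1.1 there) and used as a black box, so there is no argument in the present paper to compare yours against.

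Your proof is correct and is essentially the standard one for statements of this kind, going back to Laczkovich~\cite{Laczk}. The decomposition $N_1-N_2=b-c$ obtained from the bijection, the observation that every point counted by $b$ or $c$ lies in the $M$-collar $(\partial A)^{+M}$, and the conversion of the collar volume into a point count via the Delone packing bound are precisely the ingredients one expects (and that appear in \cite{FSS}). One minor remark: your argument dispenses directly with the auxiliary ``box diameter $\ge 1$'' hypothesis discussed in the remark following the statement, because you invoke the separation radius $r_i$ rather than assuming at most one point per unit cube; this is cleaner than the rescaling workaround the paper mentions. The caveat you flag about overcounting at lower-dimensional strata of the cube complex is indeed harmless, since only an upper bound on $\vol\bigl((\partial A)^{+M}\bigr)$ is needed.
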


\begin{remark}
	Theorem \ref{thm:from_Dirk+Yotam} originally includes the additional assumption that the Delone sets have \emph{box diameter $\ge 1$}. Namely that for every $x\in\R^d$ the cube $C(x)$ contains at most one element of each of the Delone sets. As also mentioned in \cite{FSS}, this additional assumption is unnecessary since one may replace the sets $\Lambda_i$ and $A_m$ by a mutual rescaling of them, with a suitable constant, so that this assumption holds. 
\end{remark}

Theorem \ref{thm:from_Dirk+Yotam} can also be stated in the language of tilings, where this new formulation follows directly from Theorem \ref{thm:from_Dirk+Yotam}, using the notation in \eqref{eq:[B]^T_and_no.S}.  We say that two given tilings are \emph{BD-non-equivalent} if there is no BD-map between their corresponding Delone sets. 

\begin{cor}\label{cor:from_Dirk+Yotam_for_tilings}
	Let $\TT_1, \TT_2$ be two tilings of $\R^d$. Suppose that there is a sequence of sets $A_m\in \QQ_d^*$ for which
	\begin{equation}\label{eq:non_BD_condition_tilings}
	\lim_{m\to\infty} \frac{\left| \#[A_m]^{\TT_1} - \#[A_m]^{\TT_2} \right|}{\mu_{d-1}(\partial A_m)} = \infty.
	\end{equation}   
	Then the tilings $\TT_1$ and $\TT_2$ are BD-non-equivalent.
\end{cor}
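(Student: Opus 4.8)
The plan is to derive Corollary~\ref{cor:from_Dirk+Yotam_for_tilings} as an immediate consequence of Theorem~\ref{thm:from_Dirk+Yotam} by translating the tiling-theoretic data into the Delone-set language in which Theorem~\ref{thm:from_Dirk+Yotam} is phrased. First I would fix, for each of the two tilings $\TT_1,\TT_2$, a choice of representative point set $\Lambda_i=\Lambda_{\TT_i}\subset\R^d$, placing one point in each tile (with multiplicity if a chosen point lies on a shared boundary, exactly as in \S\ref{subsec:BD}). Since the tiles have bounded diameter and inradius bounded away from zero, each $\Lambda_i$ is a Delone set, so Theorem~\ref{thm:from_Dirk+Yotam} applies to the pair $(\Lambda_1,\Lambda_2)$ once we supply a suitable sequence of cube-unions.

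The central observation is a comparison between $\#[A_m]^{\TT_i}$, the number of tiles of $\TT_i$ meeting $A_m$, and $\#(\Lambda_i\cap A_m)$, the number of chosen points lying inside $A_m$. These two counts are not equal, but they differ only because of tiles near the boundary $\partial A_m$: a tile contributing to $[A_m]^{\TT_i}$ whose chosen point is outside $A_m$ must meet $\partial A_m$ (after enlarging by the tile diameter), and a chosen point inside $A_m$ whose tile is counted is automatically in $[A_m]^{\TT_i}$. Hence if $D$ is a uniform bound on the tile diameters, then
\begin{equation}\label{eq:boundary_comparison}
\left| \#[A_m]^{\TT_i} - \#(\Lambda_i\cap A_m) \right| \le \#\{\text{tiles of }\TT_i\text{ meeting the }D\text{-neighborhood of }\partial A_m\}.
\end{equation}
Because $A_m$ is a finite union of unit cubes, the $D$-neighborhood of $\partial A_m$ has volume $O(\mu_{d-1}(\partial A_m))$ with an implied constant depending only on $d$ and $D$, and since the tiles have inradius bounded below, the number of tiles meeting a set is $O(\mathrm{vol})$ of that set's $D$-neighborhood; combining, the right-hand side of \eqref{eq:boundary_comparison} is $O(\mu_{d-1}(\partial A_m))$, uniformly in $m$ and $i$.

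With \eqref{eq:boundary_comparison} in hand the corollary follows by the triangle inequality: writing $\#[A_m]^{\TT_1}-\#[A_m]^{\TT_2}$ as $\bigl(\#(\Lambda_1\cap A_m)-\#(\Lambda_2\cap A_m)\bigr)$ plus two error terms each of size $O(\mu_{d-1}(\partial A_m))$, and dividing by $\mu_{d-1}(\partial A_m)$, we see that if the tiling ratio in \eqref{eq:non_BD_condition_tilings} tends to infinity then so does the Delone-set ratio in \eqref{eq:non_BD_condition}. Theorem~\ref{thm:from_Dirk+Yotam} then gives that there is no BD-map $\phi:\Lambda_1\to\Lambda_2$, which is precisely the assertion that $\TT_1$ and $\TT_2$ are BD-non-equivalent. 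Finally I would note, as the paper already remarked in \S\ref{subsec:BD}, that this conclusion is independent of the choices of $\Lambda_1,\Lambda_2$, since any two point-set representatives of the same tiling differ by a BD-map (each point moves by at most a tile diameter).

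The only genuinely non-routine point is the boundary estimate bounding the right-hand side of \eqref{eq:boundary_comparison} by a constant times $\mu_{d-1}(\partial A_m)$; this is where the regularity hypotheses on the tiles (bounded diameter, inradius bounded away from zero) and the fact that $A_m$ is built from lattice unit cubes are used. Everything else is bookkeeping with the triangle inequality, so I expect the proof to be short.
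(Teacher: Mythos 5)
Your proof is correct and is exactly the argument the paper intends: the paper asserts that Corollary~\ref{cor:from_Dirk+Yotam_for_tilings} ``follows directly'' from Theorem~\ref{thm:from_Dirk+Yotam} without writing out the details, and your comparison of $\#[A_m]^{\TT_i}$ with $\#(\Lambda_i\cap A_m)$ via tiles meeting a bounded neighborhood of $\partial A_m$ (whose count is $O(\mu_{d-1}(\partial A_m))$ by the bounded-diameter and inradius hypotheses, as in Laczkovich's lemmas cited later in the paper) is the standard way to fill them in. No issues.
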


\section{Proof of Corollary \ref{cor:main_result_in_d=2}}\label{sec:Example}
We begin with an example proving Corollary \ref{cor:main_result_in_d=2}, relying on Theorem \ref{thm:main_result_general}. Consider the following substitution rule $\varrho$ on a set of two tiles $T_1$ and $T_2$ in the plane, where $T_1$ is a $1\times 1$ square and $T_2$ is a $2\times 1$ rectangle: 

\begin{figure}[ht!]
	\includegraphics[scale=0.5]{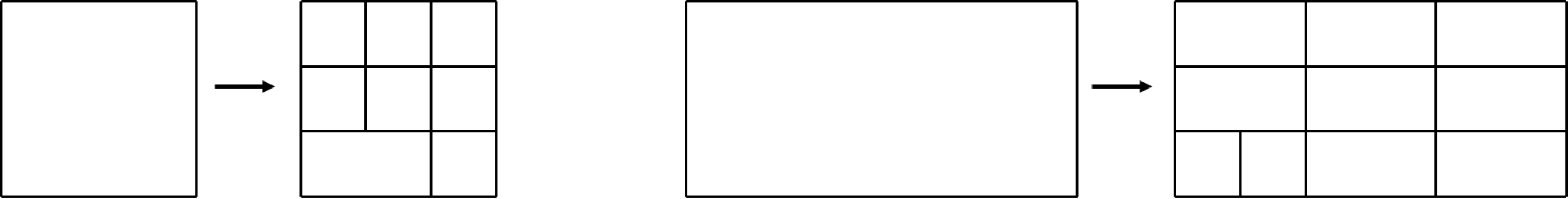}\caption{The substitution rule $\varrho$ on a square and rectangle.}\label{fig:subs_rule}
\end{figure}
Note that the corresponding substitution matrix here is $M_\varrho=\binom{7 \ 2}{1 \ 8}$, whose eigenvalues are $\lambda_1=9, \lambda_2=6$ and eigenvectors are $\vv_1=\binom{1}{1}, \vv_2=\binom{-2}{1}$ respectively.

For $k\in\N$ we denote by $R^{(k)}$ a translated copy of the patch 
 $\varrho^k(T_2)$ and by $S^{(k)}$ a translated copy of the patch supported on two adjacent patches of the form $\varrho^k(T_1)$, so that $\supp{S^{(k)}}$ is equal to $\supp{R^{(k)}}$, up to a translation. To indicate that these patches are centered at the origin we use the notations $R^{(k)}_0$ and $S^{(k)}_0$. 

\begin{lem}\label{lem:number_of_tiles_in_R^k-S^k}
	For every $k\in\N$ we have $\absolute{\#R^{(k)}-\#S^{(k)}}=6^k$.	
\end{lem}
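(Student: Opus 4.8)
The plan is to reduce the statement to a one-line eigenvector computation using the dictionary between patches and their population vectors set up in \S\ref{subsec:Notations}. First I would record the two population vectors. Since $R^{(k)}$ is a translate of $\varrho^k(T_2)$ and the vector $\vv(\cdot)$ is translation invariant, the identity $M_\varrho^m\ee_i=\vv(\varrho^m(T_i))$ gives $\vv(R^{(k)})=M_\varrho^k\ee_2$. Likewise $S^{(k)}$ is (a translate of) the disjoint union of two copies of $\varrho^k(T_1)$, so $\vv(S^{(k)})=2M_\varrho^k\ee_1$. Note that the hypothesis that $\supp{S^{(k)}}$ and $\supp{R^{(k)}}$ agree up to translation is exactly what makes this pair of patches the relevant object later; for the present count it only matters that both are legal patches, which is clear.

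Next I would use the counting formula $\#P=\inpro{\bf 1}{\vv(P)}$ from \eqref{eq:inner_product_for_volume+counting} together with linearity of $M_\varrho^k$ and of the inner product:
\[
\#R^{(k)}-\#S^{(k)}=\inpro{\bf 1}{M_\varrho^k\ee_2}-\inpro{\bf 1}{2M_\varrho^k\ee_1}=\inpro{\bf 1}{M_\varrho^k(\ee_2-2\ee_1)}.
\]
The key observation is that $\ee_2-2\ee_1=\binom{-2}{1}=\vv_2$ is precisely the eigenvector of $M_\varrho$ associated with $\lambda_2=6$. Hence $M_\varrho^k(\ee_2-2\ee_1)=6^k\vv_2$, and
\[
\#R^{(k)}-\#S^{(k)}=6^k\inpro{\bf 1}{\vv_2}=6^k\bigl((-2)+1\bigr)=-6^k,
\]
so $\absolute{\#R^{(k)}-\#S^{(k)}}=6^k$, as claimed.

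There is no real obstacle here: the entire content is the algebraic coincidence that the difference of the two ``seed'' population vectors $\ee_2$ and $2\ee_1$ lands on the second eigenline, so that iterating the substitution multiplies this difference by a clean factor of $6^k$ rather than by the Perron eigenvalue $9^k$. If anything needs care, it is only the bookkeeping that $\vv$ is insensitive to the translations defining $R^{(k)}$ and $S^{(k)}$, and that $S^{(k)}$ genuinely contributes $2M_\varrho^k\ee_1$ and not something smaller because of shared boundary tiles — but tiles of $\TT$ are counted with the multiplicity coming from the two patches $\varrho^k(T_1)$, so this is immediate from the construction.
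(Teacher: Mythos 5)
Your proof is correct and is essentially identical to the paper's own argument: both reduce the count to the sum of coordinates of $M_\varrho^k\bigl(\ee_2-2\ee_1\bigr)$, observe that $\ee_2-2\ee_1=\binom{-2}{1}$ is the eigenvector for $\lambda_2=6$, and conclude via the inner product with $\mathbf{1}$. Nothing further is needed.
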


\begin{proof}
	By the definition of the substitution matrix, the number of tiles in the patch $R^{(k)}$ (resp. $S^{(k)}$) is given by the sum of the coordinates of the vector $M^k_\varrho\binom{0}{1}$ (resp. $2M^k_\varrho\binom{1}{0}$). So the required quantity is the sum of the coordinates of the vector $M_\varrho^k \binom{0}{1}-2M_\varrho^k \binom{1}{0}=M_\varrho^k \binom{-2}{1}$. Since $\binom{-2}{1}=\vv_2$ we obtain 
	$M_\varrho^k \binom{-2}{1}=6^k\binom{-2}{1},$
	and hence
	\[\absolute{\#R^{(k)}-\#S^{(k)}}=\absolute{6^k\inpro{\binom{-2}{1}}{\binom{1}{1}}}=6^k.\]
\end{proof}

Observe that $R^{(2)}_0 = \varrho(R^{(1)}_0)$ (resp. $S^{(2)}_0$) contains a copy of a centered $R^{(1)}_0$ (resp. $S^{(1)}_0$). Thus the sequences $\left(R^{(k)}_0\right)_{k\in\N}$ and $\left(S^{(k)}_0\right)_{k\in\N}$ are nested sequences that define two fixed points of $\varrho$ in $\X_\varrho$ by 
\[\RR:=\bigcup_{k\in\N}R^{(k)}_0 \qquad \SS:=\bigcup_{k\in\N}S^{(k)}_0 .\]
Since $\mu_1(\partial\supp{R^{(k)}}) = \mu_1(\partial\supp{S^{(k)}}) = 2\cdot 3^k$, Corollary \ref{cor:existence_of_two_non_BD_tilings} below follows from Corollary \ref{cor:from_Dirk+Yotam_for_tilings} with $A_m:=\supp{R^{(m)}_0}$, and from Lemma \ref{lem:number_of_tiles_in_R^k-S^k}.

\begin{cor}\label{cor:existence_of_two_non_BD_tilings}
	The tilings $\RR$ and $\SS$ are BD-non-equivalent. 	
\end{cor}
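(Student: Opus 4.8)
The plan is to apply Corollary~\ref{cor:from_Dirk+Yotam_for_tilings} directly with the sequence of cube-unions $A_m := \supp{R^{(m)}_0}$, using the already-established Lemma~\ref{lem:number_of_tiles_in_R^k-S^k} to control the numerator and an elementary boundary estimate to control the denominator. The only genuine content beyond what is already stated is checking the two hypotheses of the corollary: first, that each $A_m$ is (a rescaled copy of) an element of $\QQ_d^*$, and second, that the limit in \eqref{eq:non_BD_condition_tilings} is indeed $+\infty$.

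First I would record the relevant quantities. By Lemma~\ref{lem:number_of_tiles_in_R^k-S^k}, with $\TT_1 = \RR$ and $\TT_2 = \SS$, we have $\left|\#[A_m]^{\RR} - \#[A_m]^{\SS}\right| = \left|\#R^{(m)} - \#S^{(m)}\right| = 6^m$, since $A_m = \supp{R^{(m)}_0} = \supp{S^{(m)}_0}$ and $\RR$ (resp.\ $\SS$) restricted to this region is exactly the patch $R^{(m)}_0$ (resp.\ $S^{(m)}_0$) by construction of the fixed points. (A small point to note: $[A_m]^\TT$ formally includes tiles merely touching the boundary of $A_m$, but since $A_m$ is a union of whole tiles of each of the two tilings, the patch $[A_m]^\TT$ coincides with the patch supported exactly on $A_m$ up to a boundary contribution of lower order, which is harmless for the limit.) For the denominator, $\supp{R^{(m)}}$ is a $2\cdot 3^m \times 3^m$ rectangle, so $\mu_1(\partial A_m) = \mu_1(\partial\supp{R^{(m)}_0}) = 2(2\cdot 3^m + 3^m) = 6\cdot 3^m$ --- in any case it is a constant times $3^m$, which is all that matters.

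Combining these, the ratio in \eqref{eq:non_BD_condition_tilings} is comparable to $6^m / 3^m = 2^m \to \infty$ as $m\to\infty$. Hence the hypothesis of Corollary~\ref{cor:from_Dirk+Yotam_for_tilings} is satisfied, and we conclude that $\RR$ and $\SS$ are BD-non-equivalent. For the first hypothesis (that $A_m \in \QQ_d^*$ after rescaling), I would invoke the remark following Theorem~\ref{thm:from_Dirk+Yotam}: the prototiles $T_1, T_2$ have integer side lengths, so after scaling the whole picture by a fixed factor one may assume the tiles, and hence the supports $\supp{R^{(m)}_0}$, are unions of lattice-centered unit cubes; the ratio in \eqref{eq:non_BD_condition_tilings} is scale-invariant (both numerator and $\mu_{d-1}(\partial A_m)$ scale, the latter by the $(d{-}1)$st power, but here the numerator is a tile-count which is unchanged, making the argument even easier), so no loss of generality is incurred.

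There is no real obstacle here: the corollary is tailor-made, Lemma~\ref{lem:number_of_tiles_in_R^k-S^k} supplies exactly the numerator asymptotics, and the boundary length is an immediate geometric computation. The only thing requiring a line of care is the bookkeeping identifying $[A_m]^{\RR}$ with $R^{(m)}_0$ (and similarly for $\SS$), i.e.\ confirming that the nested-union construction of the fixed points $\RR$ and $\SS$ makes their restriction to $A_m$ equal to the expected patch; this follows because $R^{(m)}_0 \subset R^{(m+1)}_0$ as patches and $\RR$ agrees with $R^{(m)}_0$ on $\supp{R^{(m)}_0}$ by definition of the union. Everything else is direct substitution into Corollary~\ref{cor:from_Dirk+Yotam_for_tilings}.
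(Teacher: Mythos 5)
Your proposal is correct and follows essentially the same route as the paper: apply Corollary~\ref{cor:from_Dirk+Yotam_for_tilings} with $A_m=\supp{R^{(m)}_0}$, feed in the numerator $6^m$ from Lemma~\ref{lem:number_of_tiles_in_R^k-S^k} and the perimeter of order $3^m$, and observe the ratio grows like $2^m$. Your extra care about tiles of $[A_m]^\TT$ that merely touch $\partial A_m$ and about rescaling into $\QQ_d^*$ only adds detail the paper leaves implicit.
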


The substitution rule in Figure \ref{fig:subs_rule} with the patches $R^{(1)}$ and $S^{(1)}$ also provide a proof for Corollary \ref{cor:main_result_in_d=2}. 

\begin{proof}[Proof of Corollary \ref{cor:main_result_in_d=2}]
	The substitution $\varrho$ in Figure \ref{fig:subs_rule} is primitive and we have $d=t=2$, $\absolute{\lambda_t}=6>3=\lambda_1^{(d-1)/d}$. The patches $P=R^{(1)}$ and $Q=S^{(1)}$ clearly satisfy the assumptions of Theorem \ref{thm:main_result_general} and hence $\absolute{\BD(\X_\varrho)}=\cont$. 	
\end{proof}

\ignore{
	The following lemma constitute the main step of proof of Theorem \ref{thm:main_result_in_d=2}.  
	
	\begin{prop}\label{prop:main_step}
		Set $k_i=2^{i-1}$. For every $i\in\N$ and every sequence $\sigma^i\in\{R,S\}^i$ of length $i$ there exists a legal patch $\PP_{\sigma^i}^{(k_i)}$ such that for $i=1$ we have $\PP_{R}^{(1)}=R^{(1)}_0$, $\PP_{S}^{(1)} = S^{(1)}_0$, and so that the following properties hold for every $i\in\N$ and every $\sigma_i$:
		\begin{enumerate}
			\item 
			$\PP_{\sigma^i}^{(k_i)}$ is a translated copy of $\begin{cases}
			R^{(k_i)}, \text{ if } \sigma_i(i)=R \\
			S^{(k_i)}, \text{ if } \sigma_i(i)=S
			\end{cases}$. \label{item:translated_copy_of..}
			\item
			If $\sigma^{i}$ is a prefix of $\sigma^{i+1}$ then $\PP_{\sigma^{i+1}}^{(k_{i+1})}$ contains a copy of $\PP_{\sigma^i}^{(k_i)}$ as a sub-patch, whose support contains the origin and is disjoint from the boundary of  $\supp{\PP_{\sigma^{i+1}}^{(k_{i+1})}}$.\label{item:nested+contain_0} 
			\item
			Let $p(\sigma^{i+1})$ be the center of the rectangle $\supp{\PP_{\sigma^{i+1}}^{(k_{i+1})}}$, then $\norm{p(\sigma^{i+1})}\le 5\cdot 3^{k_i}$. \label{item:distance_of_center_from_0}
		\end{enumerate} 
	\end{prop} 
	
	We remark that our choice of $k_i=2^{i-1}$ is consistent with \eqref{eq:h_and_k_i} since here $a=1, d=2$ and hence $h=j=2$.
	
	\begin{proof}[Proof of Proposition \ref{prop:main_step}]
		We repeat the proof of Proposition \ref{prop:g_main_step} for this particular case, by induction on $i$. For $i=1$ we define $\PP_{R}^{(1)}=R^{(1)}_0$ and $\PP_{S}^{(1)} = S^{(1)}_0$
		.
		\ignore{
			\begin{figure}[ht!]
				\includegraphics[scale=0.5]{R(1)andS(1)}\caption{$\PP_{R}^{(1)}$ and $\PP_{S}^{(1)}$, where the dots mark the origin.}\label{fig:R(1)andS(1)}
			\end{figure}
		}
		Suppose that $\PP_{\sigma^i}^{(k_i)}$ were defined and that the above properties hold for every $\sigma^i\in\{R,S\}^i$, and let $\sigma^{i+1}\in\{R,S\}^{i+1}$.

		
		There are four cases to consider for the last two letters of $\sigma^{i+1}$: $RR$, $SR$, $RS$ and $SS$.
		As the last letter determines the type of patch $\PP_{\sigma^{i+1}}^{(k_{i+1})}$, it is left to explain how we position the copy of $\PP_{\sigma^{i+1}[1\ldots i]}^{(k_i)}$ inside either an $R^{(k_{i+1})}$ or an $S^{(k_{i+1})}$ so that \eqref{item:nested+contain_0} and \eqref{item:distance_of_center_from_0} hold.

		
		If the last letter of $\sigma^{i+1}$ is $R$ (resp. $S$), let $T$ be the centered copy of $R^{(k_i+1)}$ (resp. $S^{(k_i+1)}$) inside $R^{(k_{i+1})}$ (resp. $S^{(k_{i+1})}$). Note that since $k_i=2^{i-1}$ the rectangle $\supp{R^{(k_{i+1})}}$ is significantly larger than $\supp{T}$, as $3^{2^{i+1}}$ is significantly larger than $3^{2^{i}+1}$.
		Inside $T$ we place a copy of $\PP_{\sigma^{i+1}[1\ldots i]}^{(k_i)}$ as shown in Figure \ref{construction_lemma_fig1}. For example, the second image from the left corresponds to the case where the last letter of $\sigma^{i+1}$ us $R$ and the one before it is $S$. The large rectangle is $T$, which is a copy of $R^{(k_i+1)}$ and the small two squares is a copy of $S^{(k_i)}$ in it, which is a copy of $\PP_{\sigma^{i+1}[1\ldots i]}^{(k_i)}$, and the picture indicates how $\PP_{\sigma^{i+1}[1\ldots i]}^{(k_i)}$ sits inside $T$. Note that by our choice of $\varrho$, which can be view in the dotted lines in the Figure \ref{construction_lemma_fig1}, these are all legal patches.
		
		\begin{figure}[ht!]
			\includegraphics[scale=0.5]{construction_lemma_fig1}
			\caption{These four pictures correspond, from left to right, to the four different options for the two last letters of $\sigma^{i+1}$ - $RR, SR, RS, SS$. The large rectangles (resp. two squares) are what we denoted as $T$, the copies of $R^{k_i+1}$ (resp. $S^{k_i+1}$), and the small dark rectangles (resp. two squares) are the copies of the $R^{k_i}$ (resp. $S^{k_i}$) that supports $\PP_{\sigma^{i+1}[1\ldots i]}^{(k_i)}$, which by the induction hypothesis contains the origin.} 
			\label{construction_lemma_fig1}
		\end{figure}
		
		With the above definition it is clear that properties \eqref{item:translated_copy_of..} and \eqref{item:nested+contain_0} hold, where the origin is contained in the copy of $\PP_{\sigma^{i+1}[1\ldots i]}^{(k_i)}$ by the induction hypothesis. 
		To see \eqref{item:distance_of_center_from_0}, notice that $p(\sigma^{i+1})$ is also the center of mass of the centered order-($k_i+1$)-rectangle $\supp{R^{(k_i+1)}}$ inside $\PP_{\sigma^{i+1}}^{(k_{i+1})}$, which by \eqref{item:nested+contain_0} of the construction contains $\PP_{\sigma^{i+1}[1\ldots i]}^{(k_{i})}$ that contains the origin. Then 
		\[\norm{p(\sigma^{i+1})}\le\frac12\diam{R^{(k_i+1)}}\le \frac12 3\cdot 3^{k_i+1}<5\cdot 3^{k_i},\] 
		as required.
	\end{proof}
	
	\begin{lem}\label{lem:TT_omega_def}
		Let $\PP_{\sigma^{i}}^{(k_{i})}$ be the patches\footnote{Note that we do not mean to a translated copy of the patch, but to that patch with its location.} constructed in Proposition \ref{prop:main_step}. Then for every infinite sequence $\omega\in\{R,S\}^\N$ there exists a tiling $\TT_\omega\in\X_\varrho$ so that for every $i\in\N$ the tiling $\TT_\omega$ contains the patch $\PP_{\omega[1\ldots i]}^{(k_{i})}$.   
	\end{lem}
	
	\begin{proof}
		Let $\omega\in\{R,S\}^\N$. By \eqref{item:nested+contain_0} of Proposition \ref{prop:main_step}, the sequence of patches $\left(\PP_{\omega[1\ldots i]}^{(k_{i})}\right)_{i\in\N}$ is a nested sequence that exhausts the plane. Thus 
		\[T_\omega := \bigcup_{i\in\N}\PP_{\omega[1\ldots i]}^{(k_{i})}\]
		satisfies the assertion. 
	\end{proof}
	
	Let $\Omega:=\{R,S\}^\N$. Since the equivalence relation on $\Omega$ in which $\omega\sim\omega'$ if the set $\{i\in\N \mid w(i)\neq w'(i)\}$ is finite provides countable equivalence classes, the cardinality of a set $\widetilde{\Omega}\subset \Omega$ of equivalence class representatives is $\cont$. We fix such a set of representatives $\widetilde{\Omega}$, then the following lemma completes the proof of Theorem \ref{thm:main_result_in_d=2}. The role of property \eqref{item:distance_of_center_from_0} of Proposition \ref{prop:main_step} will be revealed here. 
	
	\begin{lem}
		Let $\omega,\eta\in\widetilde{\Omega}$ be two distinct sequences, then $\TT_\omega$ and $\TT_\eta$, which are defined in Lemma \ref{lem:TT_omega_def}, are BD-non-equivalent.  
	\end{lem}
	
	\begin{proof}
		Since $\omega$ and $\eta$ are in $\widetilde{\Omega}$, and they are distinct, they differ at infinitely many places. Let $(i_m)_{m=1}^\infty$ be an increasing sequence so that $\omega(i_m)\neq\eta(i_m)$ for every $m$. We set $k_{i_m} = 2^{i_m-1}$ as before and apply Corollary \ref{cor:from_Dirk+Yotam_for_tilings} with the sequence of centered rectangles  
		\[A_{m}:=\supp{R^{k_{i_m}}_0} = [-3^{k_{i_m}},3^{k_{i_m}}]\times[-\frac{3^{k_{i_m}}}2,\frac{3^{k_{i_m}}}2]. \] 
		Denote by $p_\omega^{(m)}$ and $p_\eta^{(m)}$ the center of masses of $\supp{\PP_{\omega[1\ldots i_m]}^{(k_{i_m})}}$ and $\supp{\PP_{\eta[1\ldots i_m]}^{(k_{i_m})}}$ respectively. By \eqref{item:distance_of_center_from_0} of Proposition \ref{prop:main_step} we have 
		\[\norm{p_\omega^{(m)}},\norm{p_\eta^{(m)}}\le 5\cdot 3^{k_{i_m-1}}.\]
		This implies that 
		\[A_m\ \triangle\ \supp{\PP_{\omega[1\ldots i_m]}^{(k_{i_m})}}\subset \{x\in\R^2 \mid \exists y\in\partial A_m, \ \norm{x-y}\le 5\cdot 3^{k_{i_m-1}} \}, \]
		and hence 
		\[\mu_2\left(A_m\ \triangle\ \supp{\PP_{\omega[1\ldots i_m]}^{(k_{i_m})}} \right)\le 2\mu_1(\partial A_m)\cdot 5\cdot 3^{k_{i_m-1}} 
		.\]
		Since the number of tiles in large regions grows as the area of the region, there is a constant $c>0$ that depends on $\varrho$ so that 
		\begin{equation}\label{eq:tiles_in_Am_1}
		\absolute{\#[A_m]^{\TT_\omega} - \#\PP_{\omega[1\ldots i_m]}^{(k_{i_m})} } \le c \cdot 3^{k_{i_m-1}}\mu_1(\partial A_m).
		\end{equation} 
		The above computations hold for $\PP_{\eta[1\ldots i_m]}^{(k_{i_m})}$ instead of $\PP_{\omega[1\ldots i_m]}^{(k_{i_m})}$ as well, and so we also have 
		\begin{equation}\label{eq:tiles_in_Am_2}
		\absolute{\#[A_m]^{\TT_\eta} - \#\PP_{\eta[1\ldots i_m]}^{(k_{i_m})} } \le c \cdot 3^{k_{i_m-1}}\mu_1(\partial A_m).
		\end{equation}
		Combining \eqref{eq:tiles_in_Am_1} and  \eqref{eq:tiles_in_Am_2} we obtain that 
		
		\begin{equation}\label{eq:estimate_the_difference_on_Am}
		\absolute{\#[A_m]^{\TT_\omega} - \#[A_m]^{\TT_\eta} }\ge 
		\absolute{\#\PP_{\omega[1\ldots i_m]}^{(k_{i_m})} -\#\PP_{\eta[1\ldots i_m]}^{(k_{i_m})} } - 2c\cdot 3^{k_{i_m-1}}\mu_1(\partial A_m).
		\end{equation}
		Since $\omega(i_m)\neq\eta(i_m)$, and by Lemma \ref{lem:number_of_tiles_in_R^k-S^k} and property \eqref{item:translated_copy_of..} of Proposition \ref{prop:main_step}, we have 
		
		\begin{equation}\label{eq:the_difference_on_special_P_^}
		\absolute{\#\PP_{\omega[1\ldots i_m]}^{(k_{i_m})} -\#\PP_{\eta[1\ldots i_m]}^{(k_{i_m})} } = 6^{k_{i_m}}. 
		\end{equation}
		Recall that
		
		\begin{equation}\label{eq:boundary_of_Am}
		\mu_1(\partial A_m)=6\cdot 3^{k_{i_m}},
		\end{equation}
		and thus $\absolute{\#\PP_{\omega[1\ldots i_m]}^{(k_{i_m})} -\#\PP_{\eta[1\ldots i_m]}^{(k_{i_m})} }/\mu_1(\partial A_m) = \frac16 2^{k_{i_m}}$.
		Then pugging \eqref{eq:estimate_the_difference_on_Am}, \eqref{eq:the_difference_on_special_P_^} and \eqref{eq:boundary_of_Am} into \eqref{eq:non_BD_condition_tilings} we obtain that 
		
		\begin{equation}\label{eq:required-lower_bound}
		\frac{\absolute{\#[A_m]^{\TT_\omega} - \#[A_m]^{\TT_\eta} }}{\mu_1(\partial A_m)} \ge
		\frac16 2^{k_{i_m}} - 2c 3^{k_{i_m-1}}.
		\end{equation}
		Since $k_{i}=2^{i-1}$, 
		\[2^{k_{i_m}} = 2^{2^{i_m-1}} = 2^{2\cdot 2^{i_m-2}} = 4^{2^{i_m-2}} = 4^{k_{i_m-1}},\] 
		which implies that the quantity in \eqref{eq:required-lower_bound} tends to infinity with $m$. By Corollary \ref{cor:from_Dirk+Yotam_for_tilings}, the proof of the lemma and hence of Theorem \ref{thm:main_result_in_d=2} is complete. 	 
	\end{proof}
}

\section{Proof of Theorem \ref{thm:main_result_general}}\label{sec:Proof_of_main_general_thm}
This chapter contains the proof of Theorem \ref{thm:main_result_general}. 
Throughout this chapter, $\varrho$ is a primitive substitution rule defined on the set of prototiles $\FF=\{T_1,\ldots,T_n\}$ in $\R^d$, $\lambda_1>\absolute{\lambda_2}\ge\ldots\ge\absolute{\lambda_n}$ are the eigenvalues of $M_\varrho$, $(\vv_1,\ldots,\vv_n)$ is a corresponding Jordan basis, and $t\ge 2$ is as in Theorem \ref{thm:BD-criterion(Sol14)}.

\begin{lem}\label{lem:number_of_tiles_in_large_P-Q}
	Suppose that $P,Q$ are two legal patches of $\varrho$ and assume that 
	\begin{itemize}
		\item 
		$\vol(P)=\vol(Q)$.
		\item
		$\pp-\qq\notin \vv_t^\perp$, where $\pp=\vv(P), \qq=\vv(Q)$.
	\end{itemize} 
	Then there exist a constant $c_0 >0$ that depend on $P, Q$ and $\varrho$ such that
	\begin{equation}\label{eq:same_vol=>good_estimate}
	\absolute{\#\varrho^k(P) - \#\varrho^k(Q)} \ge c_0\absolute{\lambda_t}^k.	
	\end{equation}
\end{lem}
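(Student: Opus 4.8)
The plan is to reduce the inequality to the asymptotics of a single integer sequence. Set $\pp=\vv(P)$, $\qq=\vv(Q)$ and $\ww=\pp-\qq$. Since $\vv(\varrho^k(P))=M_\varrho^k\pp$ and the counting identity in \eqref{eq:inner_product_for_volume+counting} gives $\#\varrho^k(P)=\inpro{\mathbf{1}}{M_\varrho^k\pp}$ (and likewise for $Q$), we have
\begin{equation*}
\#\varrho^k(P)-\#\varrho^k(Q)=\inpro{\mathbf{1}}{M_\varrho^k\ww}=:f(k),
\end{equation*}
so it suffices to bound $\absolute{f(k)}$ from below by $c_0\absolute{\lambda_t}^k$. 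First I would expand $\ww$ in the Jordan basis $(\vv_1,\dots,\vv_n)$; in the diagonalizable case $\ww=\sum_i c_i\vv_i$ and
\begin{equation*}
f(k)=\sum_{i=1}^n c_i\,\lambda_i^k\,\inpro{\mathbf{1}}{\vv_i},
\end{equation*}
with Jordan blocks contributing only polynomial-in-$k$ factors that do not affect the exponential order.

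Next I would use the two hypotheses to collapse this sum onto its $\lambda_t$–part. The equal–volume assumption reads $\inpro{\uu_1}{\ww}=0$, and because the left Perron vector $\uu_1$ is orthogonal to every $\vv_i$ with $i\neq 1$, biorthogonality gives $c_1=\inpro{\uu_1}{\ww}/\inpro{\uu_1}{\vv_1}=0$; thus the largest term $c_1\lambda_1^k\inpro{\mathbf{1}}{\vv_1}$ vanishes, which is precisely what keeps $f$ from growing like $\lambda_1^k$. By the minimality of $t$ we have $\vv_i\in\mathbf{1}^\perp$, i.e.\ $\inpro{\mathbf{1}}{\vv_i}=0$, for all $2\le i<t$, so those terms drop out as well. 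Every surviving index has $\absolute{\lambda_i}\le\absolute{\lambda_t}$, and the role of the second hypothesis $\ww\notin\vv_t^\perp$ is to certify that the component of $\ww$ along the $\lambda_t$–eigenspace is non-trivial, so that the coefficient $A$ multiplying $\lambda_t^k$ is non-zero. This already shows that $f(k)$ has exponential order exactly $\absolute{\lambda_t}^k$.

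The main obstacle is to promote this statement about the growth order into the uniform lower bound $\absolute{f(k)}\ge c_0\absolute{\lambda_t}^k$ holding for \emph{every} $k$, not merely along a subsequence. In the operative setting $\lambda_t$ is the unique eigenvalue of its modulus that survives the two reductions, and it is real and positive, so $f(k)=A\lambda_t^k+O(\absolute{\lambda_s}^k)$ with $\absolute{\lambda_s}<\absolute{\lambda_t}$ and $A\ne 0$; dividing by $\lambda_t^k$ gives $f(k)/\lambda_t^k\to A$, whence $\absolute{f(k)}\ge\tfrac{\absolute{A}}{2}\absolute{\lambda_t}^k$ for all $k\ge K_0$ with $K_0=K_0(P,Q,\varrho)$. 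For the finitely many indices $1\le k<K_0$ the quantities $f(k)$ form a fixed finite list of integers; once one checks that each is non-zero, setting
\[
c_0:=\min\Big\{\tfrac{\absolute{A}}{2},\ \min_{1\le k<K_0}\frac{\absolute{f(k)}}{\absolute{\lambda_t}^k}\Big\}>0
\]
yields the claim for every $k$. The two points I expect to require the most care are the bookkeeping when $M_\varrho$ is non-diagonalizable or carries several eigenvalues of modulus $\absolute{\lambda_t}$ — where one must check that the equal–volume condition together with $\ww\notin\vv_t^\perp$ still forces a single non-cancelling dominant term of order $\absolute{\lambda_t}^k$ — and the non-vanishing of $f(k)$ on the initial segment $1\le k<K_0$, which is exactly what guarantees $c_0>0$; in both, the equal–volume hypothesis and the minimality of $t$ are doing the real work.
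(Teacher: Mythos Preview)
Your approach is essentially the paper's: write $\#\varrho^k(P)-\#\varrho^k(Q)=\inpro{\mathbf{1}}{M_\varrho^k\ww}$, use $\vol(P)=\vol(Q)$ to place $\ww\in\uu_1^\perp=\span{\vv_2,\dots,\vv_n}$ (so the $\lambda_1$--term is absent), use the minimality of $t$ to kill the contributions of $\vv_2,\dots,\vv_{t-1}$ after pairing with $\mathbf{1}$, and use $\ww\notin\vv_t^\perp$ to get $\alpha_t\neq 0$; the paper then simply writes $\absolute{\sum_{j\ge t}\alpha_j\lambda_j^k\inpro{\mathbf{1}}{\vv_j}}$ and says ``since $\alpha_t\neq 0$, the assertion follows''.

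The differences are only in how much you unpack that last sentence. You are more scrupulous than the paper in flagging the possible Jordan blocks, several eigenvalues of modulus $\absolute{\lambda_t}$, and the passage from the asymptotic order to a bound valid for every $k$; the paper sweeps all of this into ``the assertion follows''. Two comments on those flags. First, your extra standing assumption that $\lambda_t$ is real, positive, and the unique eigenvalue of its modulus surviving the reductions is not part of the hypotheses, so the general case still needs the bookkeeping you mention; the paper does not spell it out either. Second, your worry about $f(k)=0$ on an initial segment is harmless for the paper's purposes: the lemma is only invoked with $k=k_{i_m}\to\infty$, so an asymptotic lower bound $\absolute{f(k)}\ge c_0\absolute{\lambda_t}^k$ for all sufficiently large $k$ is all that is ever used, and that follows directly from $\alpha_t\neq 0$ and $\absolute{\lambda_j}\le\absolute{\lambda_t}$ for $j\ge t$.
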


\begin{proof}
	Recall that $\uu_1$ denotes the first eigenvector of $M_\varrho^T$, thus $\uu_1^\perp = \span{\vv_2,\ldots,\vv_n}$. Since $\uu_1$ can be taken to be the vector of volumes of the prototiles, as in \eqref{eq:inner_product_for_volume+counting},
	\[\inpro{\uu_1}{\pp}=\vol(P)=\vol(Q)=\inpro{\uu_1}{\qq},\]
	and thus 
	
	\begin{equation}\label{eq:p-q_in_span{v_2,...,v_n}}
		\pp-\qq\in \uu_1^\perp = \span{\vv_2,\ldots,\vv_n}. 
	\end{equation}
	In addition, for every $k\in\N$ we have 
	
	\begin{equation}\label{eq:tiles_in_P-in_Q--1}
	\#\varrho^k(P) - \#\varrho^k(Q) = \inpro{\bf{1}}{M_\varrho^k(\pp)} - \inpro{\bf{1}}{M_\varrho^k(\qq)} = \inpro{\bf{1}}{M_\varrho^k(\pp-\qq)}.
	\end{equation}
	By \eqref{eq:p-q_in_span{v_2,...,v_n}},   
	\[M_\varrho^k(\pp - \qq) = \alpha_2\lambda_2^k\vv_2 + \ldots + \alpha_n\lambda_n^k\vv_n, \] 
	for some constants $\alpha_2,\ldots,\alpha_n\in\C$. But by the definition of $t$, for any $j<t$ we have $\inpro{\bf{1}}{\vv_j}=0$ and thus 
	
	\begin{equation}\label{eq:tiles_in_P-in_Q--2}
		\inpro{\bf{1}}{M_\varrho^k(\pp - \qq)} = \sum_{j=t}^n\inpro{\bf{1}}{\alpha_j\lambda_j^k\vv_j} = \sum_{j=t}^n\alpha_j\lambda_j^k\inpro{\bf{1}}{\vv_j}.
	\end{equation}
	Note that by assumption $\pp - \qq\notin (\span{\vv_t})^\perp$, then $\alpha_t\neq 0$. Combining \eqref{eq:tiles_in_P-in_Q--1} and \eqref{eq:tiles_in_P-in_Q--2} we see that 
	
	\[\absolute{\#\varrho^k(P) - \#\varrho^k(Q)} = \absolute{\sum_{j=t}^n\alpha_j\lambda_j^k\inpro{\bf{1}}{\vv_j}},\]
	and since $\alpha_t\neq 0$, the assertion follows.  
\end{proof}

Let $P$ and $Q$ be two legal patches and write $P=\varrho^{a_1}(T_i)$ and $Q=\varrho^{a_2}(T_j)$ with $a_1,a_2\in\N$ and $i,j\in\{1,\ldots,n\}$. For a patch $\PPP$ and a point $\xx\in\supp{\PPP}$ we use the notation 

\begin{equation}\label{eq:P_x}
\PPP_\xx := \text{ the translated copy of } \PPP \text{ in which } \xx \text{ is at the origin}.
\end{equation}
The primitivity of $\varrho$ is used for the simple observation that is given in the following lemma.

\begin{lem}\label{lem:xx(P)}
	There exists an $a_0\in\N$ such that 
	\begin{enumerate}
		\item
		The patch $\varrho^{a_0}(P)$ contains a patch $\PPP$, which is a translated copy of $P$ whose support is disjoint from the boundary of the support of $\varrho^{a_0}(P)$. 
		In particular, there is a (unique) point $\xx(P)\in\supp{P}$ so that the copy $\PPP$ in $\varrho^{a_0}(P_{\xx(P)})$ coincides with the patch $P_{\xx(P)}$. \label{item:prop_of_xx(P)}
		\item
		The patch $\varrho^{a_0}(P)$ also contains a translated copy of $Q$.\label{item:varrho^a(P)_contains_Q}	
	\end{enumerate}	
\end{lem}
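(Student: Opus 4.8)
The plan is to extract the point $\xx(P)$ and the bound $a_0$ by invoking primitivity twice — once to force $P$ to reappear ``well inside'' a high iterate, and once to fit a copy of $Q$ into the same iterate. First I would fix $m_0\in\N$ such that all entries of $M_\varrho^{m_0}$ are positive; this is exactly the primitivity hypothesis. Recalling that $P=\varrho^{a_1}(T_i)$, the patch $\varrho^{m_0}(T_i)$ contains at least one tile of every type, in particular a tile of type $i$; hence, applying $\varrho^{a_1}$ to that tile, the patch $\varrho^{a_1+m_0}(T_i)=\varrho^{m_0}\!\big(\varrho^{a_1}(T_i)\big)=\varrho^{m_0}(P)$ contains a translated copy of $\varrho^{a_1}(T_i)=P$. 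The only thing to check for part~\eqref{item:prop_of_xx(P)} is that this copy can be chosen disjoint from $\partial\supp{\varrho^{m_0}(P)}$: iterate once more if necessary. Indeed, under $\varrho$ the interior of a tile is tiled by tiles strictly interior to the scaled tile together with tiles meeting the boundary; since $M_\varrho^{m_0}$ has all entries positive, for $N$ large the number of tiles of type $i$ strictly interior to $\varrho^{N}(T_i)$ grows like $\lambda_1^{N}$ while the number meeting the boundary grows at most like $\lambda_1^{(d-1)N/d}$ (by the Hausdorff-measure scaling \eqref{eq:Hausdorff_scaling}, applied as in \eqref{eq:g_boundary_of_Am}), so for $N$ large there is an interior copy. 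Set $a_0:=N$. Given such an interior copy $\PPP$, there is a unique translation vector carrying $P$ onto $\PPP$; the preimage of the origin under that translation is a well-defined point of $\supp{P}$, which we name $\xx(P)$, and by construction $P_{\xx(P)}$ agrees with the designated copy $\PPP$ inside $\varrho^{a_0}(P_{\xx(P)})$.

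For part~\eqref{item:varrho^a(P)_contains_Q}, the same mechanism applies to $Q=\varrho^{a_2}(T_j)$: after possibly enlarging $a_0$, the positive matrix $M_\varrho^{a_0-a_1}$ guarantees that $\varrho^{a_0}(P)=\varrho^{a_0-a_1}\!\big(\varrho^{a_1}(T_i)\big)$, which contains $\varrho^{a_0-a_1}(T_i)$ and hence every $\varrho^{\ell}(T_j)$ with $\ell\le a_0-a_1$ — in particular $\varrho^{a_2}(T_j)=Q$, provided $a_0\ge a_1+a_2$. Since both requirements only ask $a_0$ to be sufficiently large, we take $a_0$ to be the maximum of the two thresholds, which is then a single natural number satisfying \eqref{item:prop_of_xx(P)} and \eqref{item:varrho^a(P)_contains_Q} simultaneously.

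I expect the only genuine (and still routine) obstacle to be the ``disjoint from the boundary'' clause: strictly speaking one must rule out the degenerate possibility that \emph{every} copy of $P$ inside every high iterate of $P$ touches the boundary. This is handled by the counting/volume argument sketched above — interior copies outnumber boundary-meeting tiles once $\lambda_1^{N}\gg\lambda_1^{(d-1)N/d}$ — and uses nothing beyond primitivity and the regularity assumption on tiles recorded in \S\ref{sec:Background_and_definitions}. Everything else is bookkeeping with the identities $M_\varrho^k\ee_i=\vv(\varrho^k(T_i))$ and the semigroup property $\varrho^{k+\ell}=\varrho^k\circ\varrho^\ell$.
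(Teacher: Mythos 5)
Your first part---using primitivity to find a tile of type $i$ in a high iterate and pushing it forward by $\varrho^{a_1}$ to obtain a copy of $P$, with a volume-versus-boundary count to guarantee an interior copy---is essentially the paper's argument (the paper simply asserts the existence of tiles of all types disjoint from the boundary; your counting sketch is a legitimate way to justify that assertion). The argument for the copy of $Q$ also matches the paper, modulo two bookkeeping slips: $\varrho^{a_0}(P)=\varrho^{a_0+a_1}(T_i)$, not $\varrho^{a_0-a_1}\!\left(\varrho^{a_1}(T_i)\right)$, and to extract a copy of $\varrho^{a_2}(T_j)$ from $\varrho^{N}(T_i)$ you need $N-a_2$ to be at least the primitivity exponent $m_0$ (so that $\varrho^{N-a_2}(T_i)$ contains a tile of type $j$), not merely $N\ge a_1+a_2$.

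The genuine gap is the construction of $\xx(P)$. The requirement that the copy $\PPP$ in $\varrho^{a_0}(P_{\xx(P)})$ coincide with $P_{\xx(P)}$ is a fixed-point equation involving the expansion by $\xi^{a_0}$, not only a translation: if $\PPP=P+\ww$ inside $\varrho^{a_0}(P)$ (whose support is $\xi^{a_0}\supp{P}$), then $\varrho^{a_0}(P-\xx)=\varrho^{a_0}(P)-\xi^{a_0}\xx$, its designated copy is $P+\ww-\xi^{a_0}\xx$, and the coincidence condition $P+\ww-\xi^{a_0}\xx=P-\xx$ forces $\xx=\ww/(\xi^{a_0}-1)$. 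Your candidate, ``the preimage of the origin under the translation carrying $P$ onto $\PPP$'', is $-\ww$; it need not lie in $\supp{P}$, it does not solve the equation above, and the claim that the coincidence holds ``by construction'' is therefore unjustified (a pure translation has no fixed point, so no single translation can produce the required self-coincidence). The correct point is the unique fixed point of the contraction $y\mapsto\xi^{-a_0}(y+\ww)$ of $\supp{P}$; the paper obtains it, together with membership in $\supp{P}$ and uniqueness, as the nested intersection $\bigcap_{m\in\N}\xi^{-ma_0}\PPP_m$, where $\PPP_{m+1}$ is the designated copy of $P$ inside $\varrho^{a_0}(\PPP_m)$. Some version of this step is needed; as written, the second half of part (1) is not established.
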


\begin{proof}
By the primitivity of $\varrho$, for a large integer $b$, $\varrho^b(P)$ contains copies of all tile types, and also tiles of all types that are disjoint from $\partial \supp{\varrho^b(P)}$. Hence there exists some $a_0$ so that for every $a\ge a_0$ the patch $\varrho^{a}(P)$ contains translated copies of both $P$ and $Q$, which are disjoint from $\partial\supp{\varrho^{a}(P)}$. 
Fix a copy of $P$ in $\varrho^{a_0}(P)$, whose support is disjoint from $\partial \supp{\varrho^{a_0}(P)}$, and denote it by $\PPP$. 

The point $\xx(P)$ can be defined as follows. Repeating the above argument one finds a patch $\PPP_1$ inside $\varrho^{a_0}(\PPP)$, a patch $\PPP_2$ inside $\varrho^{a_0}(\PPP_1)$, etc. Each $\PPP_{m+1}$ is a copy of $P$ that sits inside $\varrho^{a_0}(\PPP_{m})$, thus the nested intersection $\bigcap_{m\in\N}\xi^{-ma_0}\PPP_m$ is a point that satisfies the requirements. 
\end{proof}

\ignore{	
\begin{figure}[ht!]
	\includegraphics[scale=0.5]{centered_copy}\caption{An illustration for the location of $\xx(P)$ in the copy of $P$ inside $\varrho^a(P)$ (resp. $Q$).}\label{fig:centered_copies}
\end{figure}
}

To prove Theorem \ref{thm:main_result_general} we explicitly construct continuously many distinct tilings, where each one of them is defined as an increasing union of a certain nested sequence of patches. 
To define these patches, we set the following notations.  

Let $P$ and $Q$ be two legal patches whose supports differ by a translation. We fix marked points $\xx(P)\in \supp{P}$ and  $\xx(Q)\in \supp{Q}$ as in Lemma \ref{lem:xx(P)}. For any scaled copy $\beta P$ of $P$ (resp. Q) we set $\xx(\beta P):=\beta\cdot\xx(P)$. We also fix the number $a$ to be the maximum between the values of $a_0$ that are obtained when applying Lemma \ref{lem:xx(P)} with $P$ and with $Q$. Then the patch $\varrho^a(P)$ contains a copy of $P$ and the patch $\varrho^a(Q)$ contains a copy of $Q$, as in Lemma \ref{lem:xx(P)}. We refer to these particular patches as
\begin{itemize}
	\item 
	\emph{the centered copy of $P$ in $\varrho^{a}(P)$}.
	\item 
	\emph{the centered copy of $Q$ in $\varrho^{a}(Q)$}.
\end{itemize}
Lemma \ref{lem:xx(P)} can be applied repeatedly. For integers $k<m$, the notions of  
\begin{itemize}
	\item 
	\emph{the centered copy of $\varrho^{ka}(P)$ in $\varrho^{ma}(P)$}
	\item 
	\emph{the centered copy of $\varrho^{ka}(Q)$ in $\varrho^{ma}(Q)$},
\end{itemize}
play an important role in the proof of Proposition \ref{prop:g_main_step} below, which is the core of the proof of Theorem \ref{thm:main_result_general}. 
We use the notation $\sigma^i\in\{P,Q\}^i$ for a finite sequence of length $i$, where $\sigma^i(\ell)$ denotes the $\ell$'th letter and $\sigma^i[1\ldots \ell]$ is the prefix of length $\ell$ of $\sigma^i$. 
Finally, relying on the assumption $\absolute{\lambda_t} > \lambda_1^{\frac{d-1}{d}}$ of Theorem \ref{thm:main_result_general}, we fix $h \in a\cdot\N$ to be the smallest multiple of $a$ that satisfies 
\begin{equation}\label{eq:h_and_k_i}
\lambda_1^{\frac1{h}} < \frac{\absolute{\lambda_t}}{\lambda_1^{(d-1)/d}} \ \, \quad\text{ and set }\quad k_i:= h^{i-1}.
\end{equation}  

\begin{prop}\label{prop:g_main_step}
	For every $i\in\N$ and every sequence $\sigma^i\in\{P,Q\}^i$ of length $i$ there exists a legal patch $\PPP_{\sigma^i}^{(k_i)}$ such that for $i=1$ we have $\PPP_{P}^{(1)}=P_{\xx(P)}$, $\PPP_{Q}^{(1)} = Q_{\xx(Q)}$, and so that the following properties hold for every $i\in\N$ and every $\sigma^i\in\{P,Q\}^i$:
	\begin{enumerate}
		\item 
		$\PPP_{\sigma^i}^{(k_i)}$ is a translated copy of $\begin{cases}
		\varrho^{k_i}(P), \text{ if } \sigma^i(i)=P \\
		\varrho^{k_i}(Q), \text{ if } \sigma^i(i)=Q
		\end{cases}$. \label{item:g_translated_copy_of..}
		\item
		If $\sigma^{i}$ is a prefix of $\sigma^{i+1}$ then $\PPP_{\sigma^{i+1}}^{(k_{i+1})}$ contains a copy of $\PPP_{\sigma^i}^{(k_i)}$ as a sub-patch, whose support contains the origin and is disjoint from the boundary of  $\supp{\PPP_{\sigma^{i+1}}^{(k_{i+1})}}$.\label{item:g_nested+contain_0} 
		\item
		$\norm{\xx\left(\PPP_{\sigma^{i+1}}^{(k_{i+1})}\right)}\le c_1\cdot \lambda_1^{k_i/d}$, where $c_1 = \lambda_1^{a/d}\diam{\supp{P}}$.\label{item:g_distance_of_center_from_0}
	\end{enumerate} 
\end{prop}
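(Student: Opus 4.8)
The plan is to construct the patches $\PPP_{\sigma^i}^{(k_i)}$ by induction on $i$, using the ``centered copy'' operation from Lemma \ref{lem:xx(P)} as the main tool for nesting. For the base case $i=1$ we simply set $\PPP_P^{(1)} := P_{\xx(P)}$ and $\PPP_Q^{(1)} := Q_{\xx(Q)}$; properties \eqref{item:g_translated_copy_of..} and \eqref{item:g_nested+contain_0} are either vacuous or immediate (the origin lies in $\supp{P_{\xx(P)}}$ by the definition of the marked point $\xx(P)$), so only the inductive step requires work.

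For the inductive step, assume $\PPP_{\sigma^i}^{(k_i)}$ has been constructed for every $\sigma^i\in\{P,Q\}^i$ with the three properties, and let $\sigma^{i+1}\in\{P,Q\}^{i+1}$ with prefix $\sigma^i:=\sigma^{i+1}[1\ldots i]$. The last letter $\sigma^{i+1}(i+1)$ dictates that $\PPP_{\sigma^{i+1}}^{(k_{i+1})}$ must be a translated copy of $\varrho^{k_{i+1}}(P)$ or $\varrho^{k_{i+1}}(Q)$; say it is $\varrho^{k_{i+1}}(P)$, the other case being identical. Since $k_{i+1}=h\cdot h^{i-1}=h\cdot k_i$ and $h\in a\N$, write $k_{i+1}=k_i + (k_{i+1}-k_i)$ where $k_{i+1}-k_i\in a\N$ as well (because $k_{i+1}=h^i$ and $k_i=h^{i-1}$ are both multiples of $a$ once $i\ge 2$; for $i=1$, $k_1=1$ and $k_2=h\in a\N$, which also works since Lemma~\ref{lem:xx(P)} with exponent $k_2-k_1$ still applies — here one should be slightly careful and it may be cleaner to phrase the centered-copy construction directly in terms of $\varrho^{k_i a}$-type exponents as the paper's running notation suggests). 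Applying Lemma \ref{lem:xx(P)} repeatedly $(k_{i+1}-k_i)/a$ times, the patch $\varrho^{k_{i+1}}(P)$ contains a centered copy of $\varrho^{k_i}(P)$ whose support is disjoint from $\partial\supp{\varrho^{k_{i+1}}(P)}$; inside that copy we place the translate of $\PPP_{\sigma^i}^{(k_i)}$ that makes it agree with this centered copy of $\varrho^{k_i}(P)$ — which is legitimate precisely because, by induction hypothesis \eqref{item:g_translated_copy_of..}, $\PPP_{\sigma^i}^{(k_i)}$ \emph{is} a translated copy of $\varrho^{k_i}(P)$ (or $\varrho^{k_i}(Q)$, and here we must use the fact that both $\varrho^{k_i}(P)$ and $\varrho^{k_i}(Q)$ occur as centered sub-patches — this is exactly item \eqref{item:varrho^a(P)_contains_Q} of Lemma \ref{lem:xx(P)}, which is why $a$ was taken large enough to handle both $P$ and $Q$ simultaneously). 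This placement, together with the induction hypothesis that the origin lies in $\supp{\PPP_{\sigma^i}^{(k_i)}}$ and away from its boundary, gives \eqref{item:g_nested+contain_0}.

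For property \eqref{item:g_distance_of_center_from_0}: the marked point $\xx(\PPP_{\sigma^{i+1}}^{(k_{i+1})})$ is, by the convention $\xx(\beta P)=\beta\xx(P)$, a point of $\supp{\PPP_{\sigma^{i+1}}^{(k_{i+1})}}=\xi^{k_{i+1}}\supp{P}+(\text{translation})$; since $\PPP_{\sigma^{i+1}}^{(k_{i+1})}$ contains the origin (by \eqref{item:g_nested+contain_0}), the distance from $\xx(\PPP_{\sigma^{i+1}}^{(k_{i+1})})$ to the origin is at most $\diam{\supp{\PPP_{\sigma^{i+1}}^{(k_{i+1})}}} = \xi^{k_{i+1}}\diam{\supp{P}}$ — but this is too crude. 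The sharper bound exploits \eqref{item:g_nested+contain_0} one level down: the centered copy of $\varrho^{k_i+a}(P)$ (or rather $\varrho^{k_{i+1}-(k_{i+1}-k_i-a)}$, chosen so its diameter is $\xi^{k_i+a}\diam{\supp{P}}$) inside $\PPP_{\sigma^{i+1}}^{(k_{i+1})}$ itself contains the origin, and $\xx(\PPP_{\sigma^{i+1}}^{(k_{i+1})})$ lies in a controlled neighborhood of it; using $\xi^d=\lambda_1$ gives $\norm{\xx(\PPP_{\sigma^{i+1}}^{(k_{i+1})})}\le \xi^{k_i+a}\diam{\supp{P}} = \lambda_1^{(k_i+a)/d}\diam{\supp{P}} = c_1\lambda_1^{k_i/d}$ with $c_1=\lambda_1^{a/d}\diam{\supp{P}}$, as required. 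The main obstacle is bookkeeping the exponents so that the centered-copy machinery of Lemma \ref{lem:xx(P)} (which naturally operates in steps of size $a$) meshes cleanly with the choice $k_i=h^{i-1}$, $h\in a\N$, and in making sure that at each inductive step the sub-patch $\PPP_{\sigma^i}^{(k_i)}$ — which could be a copy of \emph{either} $\varrho^{k_i}(P)$ or $\varrho^{k_i}(Q)$ depending on $\sigma^i(i)$ — can always be re-embedded inside the centered copy of the appropriate $\varrho^{k_{i+1}}$-patch; this is guaranteed by clause \eqref{item:varrho^a(P)_contains_Q} of Lemma \ref{lem:xx(P)}, but it is the point that must be invoked carefully.
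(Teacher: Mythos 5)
Your proposal follows essentially the same route as the paper: induction on $i$, with the new patch $\varrho^{k_{i+1}}(P)$ (or $\varrho^{k_{i+1}}(Q)$) rigidly positioned via the intermediate centered copy of $\varrho^{k_i+a}(P)$ so that the appropriate sub-copy of $\varrho^{k_i}(P)$ or $\varrho^{k_i}(Q)$ guaranteed by Lemma~\ref{lem:xx(P)} coincides with the already-fixed $\PPP_{\sigma^i}^{(k_i)}$, and with property~\eqref{item:g_distance_of_center_from_0} obtained exactly as in the paper from the fact that both the origin and the marked point lie in that intermediate copy of diameter $\xi^{k_i+a}\diam{\supp{P}}$. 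The only quibble is the phrasing ``we place the translate of $\PPP_{\sigma^i}^{(k_i)}$'': it is the \emph{larger} patch that must be translated to fit around the already-positioned $\PPP_{\sigma^i}^{(k_i)}$ (otherwise the origin-containment from the induction hypothesis would be lost), but your subsequent use of that hypothesis shows this is what you intend.
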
 

\begin{proof}
	The proof is by induction on $i$. For $i=1$ we define $\PPP_{P}^{(1)}=P_{\xx(P)}$, $\PPP_{Q}^{(1)} = Q_{\xx(Q)}$.
	Suppose that the patches $\PPP_{\sigma^i}^{(k_i)}$ were defined and that the above properties hold for every $\sigma^i\in\{P,Q\}^i$, we define the patches $\PPP_{\sigma^{i+1}}^{(k_{i+1})}$ as follows. Fix some  $\sigma^{i+1}\in\{P,Q\}^{i+1}$. 
		
	Recall that $h$ is a multiple of $a$ and observe that $k_{i+1} = h^i$ is a much larger integer than $k_i+a=h^{i-1}+a$. If the $i+1$ letter of $\sigma^{i+1}$ is $P$, denote by $\mathscr{T}_P$ the centered copy of $\varrho^{(k_i+a)}(P)$ inside $\varrho^{(k_{i+1})}(P)$ (respectively, if $\sigma^{i+1}(i+1)=Q$ let $\mathscr{T}_Q$ be the centered copy of $\varrho^{(k_i+a)}(Q)$ inside $\varrho^{(k_{i+1})}(Q)$). 
	A key observation is that positioning $\mathscr{T}_P$ (resp. $\mathscr{T}_Q$) in $\R^d$ forces the position of the much larger patch $\varrho^{(k_{i+1})}(P)$ (resp. $\varrho^{(k_{i+1})}(Q)$) that contains it. Consider the centered copy of $\varrho^{(k_i)}(P)$ or the copy of $\varrho^{(k_i)}(Q)$ inside $\mathscr{T}_P$, which exists by Lemma \ref{lem:xx(P)}, depending on whether the $i$'th letter of $\sigma^{i+1}$ is $P$ or $Q$ (resp. inside $\mathscr{T}_Q$ consider the centered copy of $\varrho^{(k_i)}(Q)$ or the copy of $\varrho^{(k_i)}(P)$). 
	One of these two patches, depending on the $i$'th letter of $\sigma^{i+1}$, is a translated copy of the patch $\PPP_{\sigma^{i+1}[1\ldots i]}^{(k_i)}$ that we have obtained from the induction hypothesis. 
	{\bf We place $\mathscr{T}_P$ (resp. $\mathscr{T}_Q$) so that the above particular copy of $\varrho^{(k_i)}(P)$ or of $\varrho^{(k_i)}(Q)$ in it coincide with $\PPP_{\sigma^{i+1}[1\ldots i]}^{(k_i)}$}, see Figure \ref{fig:Positioning}. 
	The above placement fixes the position of the copy of the patch $\varrho^{(k_{i+1})}(P)$ or $\varrho^{(k_{i+1})}(Q)$ from which we have started, and we define this fixed patch to be  $\PPP_{\sigma^{i+1}}^{(k_{i+1})}$.  
	\begin{figure}[ht!]
		\includegraphics[scale=0.5]{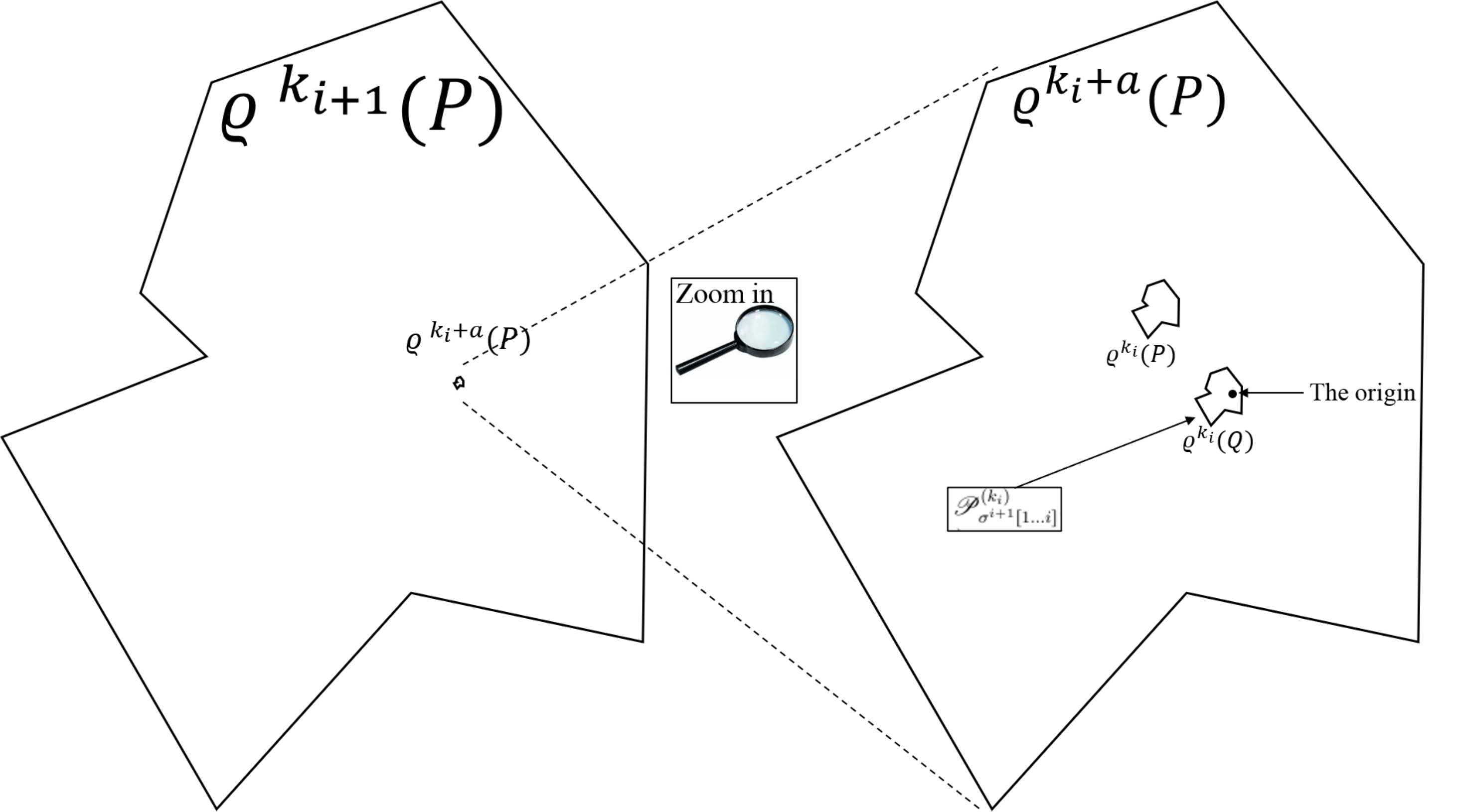}\caption{This picture corresponds to the case where the last two letters of $\sigma^{i+1}$ are $QP$ and it is done similarly for the other three possible options. The illustration shows how to position the patch $\varrho^{(k_{i+1})}(P)$, which is later defined to be $\PPP_{\sigma^{i+1}}^{(k_{i+1})}$, providing that we know the position of $\PPP_{\sigma^{i+1}[1\ldots i]}^{(k_i)}$, which was given to us by the induction hypothesis. In this picture, as the $i$'th letter of $\sigma^{i+1}$ is $Q$, the patch $\PPP_{\sigma^{i+1}[1\ldots i]}^{(k_i)}$ is a translated copy of $\varrho^{(k_i)}(Q)$. We place $\varrho^{(k_{i+1})}(P)$ such that the copy of $\varrho^{(k_i)}(Q)$ inside the centered copy of $\varrho^{(k_i+a)}(P)$ in $\varrho^{(k_{i+1})}(P)$ (given by Lemma \ref{lem:xx(P)}), coincide with $\PPP_{\sigma^{i+1}[1\ldots i]}^{(k_i)}$.} \label{fig:Positioning}
	\end{figure}

	It is left the verify the validity of properties \eqref{item:g_translated_copy_of..}, \eqref{item:g_nested+contain_0} and \eqref{item:g_distance_of_center_from_0}. By the induction hypothesis the origin is contained in $\PPP_{\sigma^{i+1}[1\ldots i]}^{(k_i)}$, hence properties \eqref{item:g_translated_copy_of..} and \eqref{item:g_nested+contain_0} follow directly from the construction. Note that the support of $\PPP_{\sigma^{i+1}[1\ldots i]}^{(k_i)}$ is indeed disjoint from the boundary of  $\supp{\PPP_{\sigma^{i+1}}^{(k_{i+1})}}$ by \eqref{item:prop_of_xx(P)} of Lemma \ref{lem:xx(P)}.
	To see \eqref{item:g_distance_of_center_from_0}, note that by our definition of the notion of a centered copy, 
	the point $\xx\left(\PPP_{\sigma^{i+1}}^{(k_{i+1})}\right)$ belongs to $\mathscr{T}_P$ (or to $\mathscr{T}_Q$, depends on $\sigma^{i+1}(i+1)$), which also contains the origin. Since for every $m\in\N$ the diameter of $\supp{\varrho^m(P)}$ is $\xi^m\diam{\supp{P}} = (\lambda_1^{1/d})^m\diam{\supp{P}}$ (see \S \ref{subsec:Notations}), and since $\mathscr{T}_P$ is a translate of $\varrho^{k_{i}+a}(P)$, 
	we have  
	\[\norm{\xx\left(\PPP_{\sigma^{i+1}}^{(k_{i+1})}\right)}\le \diam{\supp{\varrho^{k_{i}+a}(P)}} = 
	\lambda_1^{\frac{k_i+a}{d}}\diam{\supp{P}} = c_1\cdot \lambda_1^{k_i/d}.\] 
	In case $\sigma^{i+1}(i+1)=Q$, since $\diam{\supp{P}}=\diam{\supp{Q}}$, the same computation holds and the proof is complete.
\end{proof}

\begin{lem}\label{lem:g_TT_omega_def}
	For every infinite sequence $\omega\in\{P,Q\}^\N$ there exists a tiling $\TT_\omega\in\X_\varrho$ so that for every $i\in\N$ the tiling $\TT_\omega$ contains the patch $\PPP_{\omega[1\ldots i]}^{(k_{i})}$, defined in Proposition \ref{prop:g_main_step}.   
\end{lem}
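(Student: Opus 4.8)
The plan is to set $\TT_\omega := \bigcup_{i\in\N}\PPP_{\omega[1\ldots i]}^{(k_i)}$, taking the union of \emph{located} patches exactly as in the commented-out construction for the planar example. First I would observe that this is a well-defined collection of tiles with pairwise disjoint interiors: by property \eqref{item:g_nested+contain_0} of Proposition \ref{prop:g_main_step}, iterated, $\PPP_{\omega[1\ldots i]}^{(k_i)}$ is literally a sub-patch of $\PPP_{\omega[1\ldots j]}^{(k_j)}$ for all $j>i$ (the copy produced by the construction coincides, as a located patch, with $\PPP_{\omega[1\ldots i]}^{(k_i)}$), so the tiles of the $\PPP_{\omega[1\ldots i]}^{(k_i)}$ form a consistent nested family, and the origin lies in the interior of $\supp{\PPP_{\omega[1\ldots i]}^{(k_i)}}$ for every $i$.

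The only nontrivial point is that $\supp{\TT_\omega}=\R^d$. By property \eqref{item:g_translated_copy_of..}, $\supp{\PPP_{\omega[1\ldots i]}^{(k_i)}}$ is a translate of $\xi^{k_i}\supp P$ or of $\xi^{k_i}\supp Q$; since $\supp P$ and $\supp Q$ differ by a translation it suffices to treat the first case. From the construction of $\xx(P)$ in Lemma \ref{lem:xx(P)} one checks that $\xx(P)$ lies in the \emph{interior} of $\supp P$: it is the intersection of a decreasing sequence of translates of $\xi^{-ma_0}\supp P$, each contained in the interior of the previous one, hence it is interior to $\supp P$ itself. Therefore $\supp P-\xx(P)$ contains a ball $B(0,\eps_0)$ with $\eps_0>0$ depending only on $P,Q,\varrho$. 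Writing $\supp{\PPP_{\omega[1\ldots i]}^{(k_i)}}=\xi^{k_i}(\supp P-\xx(P))+\xx\!\left(\PPP_{\omega[1\ldots i]}^{(k_i)}\right)$ and bounding $\norm{\xx(\PPP_{\omega[1\ldots i]}^{(k_i)})}\le c_1\lambda_1^{k_{i-1}/d}=c_1\xi^{k_{i-1}}$ by property \eqref{item:g_distance_of_center_from_0} (the marked point is the origin when $i=1$), we obtain
\[
B\!\left(0,\ \xi^{k_i}\eps_0-c_1\xi^{k_{i-1}}\right)\subseteq \supp{\PPP_{\omega[1\ldots i]}^{(k_i)}}.
\]
Since $k_i-k_{i-1}=h^{i-2}(h-1)\to\infty$ (recall $h\ge 2$), the radius $\xi^{k_{i-1}}\!\left(\xi^{k_i-k_{i-1}}\eps_0-c_1\right)$ tends to infinity, so every fixed ball about the origin is eventually contained in some $\supp{\PPP_{\omega[1\ldots i]}^{(k_i)}}$, and hence $\supp{\TT_\omega}=\R^d$. (Alternatively, since the $\PPP_{\omega[1\ldots i]}^{(k_i)}$ form an ascending chain of legal patches with the origin strictly interior, one could instead invoke the standard fact that such a chain extends to a tiling in $\X_\varrho$; the explicit exhaustion avoids this.)

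Finally I would verify $\TT_\omega\in\X_\varrho$: every finite sub-patch of $\TT_\omega$ is, by nestedness, a sub-patch of some $\PPP_{\omega[1\ldots i]}^{(k_i)}$, which is a translate of $\varrho^{k_i}(P)$ or $\varrho^{k_i}(Q)$; since $P$ and $Q$ are legal, say $P\subseteq\varrho^m(T)$, we have $\varrho^{k_i}(P)\subseteq\varrho^{k_i+m}(T)\in\mathscr{L}_\varrho$, so $\varrho^{k_i}(P)$ (and likewise $\varrho^{k_i}(Q)$) is legal. Thus every patch of $\TT_\omega$ is legal, i.e. $\TT_\omega\in\X_\varrho$, and by construction $\TT_\omega$ contains $\PPP_{\omega[1\ldots i]}^{(k_i)}$ for every $i\in\N$. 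The main obstacle is the exhaustion step above — confirming that the nested, scaled supports, whose positional drift is controlled by property \eqref{item:g_distance_of_center_from_0}, genuinely cover $\R^d$ rather than a proper sub-region; everything else is bookkeeping with the three properties of Proposition \ref{prop:g_main_step}.
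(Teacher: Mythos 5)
Your proof is correct and follows essentially the same route as the paper: define $\TT_\omega$ as the nested union $\bigcup_{i}\PPP_{\omega[1\ldots i]}^{(k_i)}$, use property \eqref{item:g_nested+contain_0} for consistency of the nesting, and check that the supports exhaust $\R^d$. The only difference is one of detail: the paper delegates the exhaustion to ``the proof of Proposition \ref{prop:g_main_step}'', whereas you spell it out via the interiority of $\xx(P)$ and the drift bound of property \eqref{item:g_distance_of_center_from_0} (and you also verify legality explicitly); these added details are accurate and close a step the paper leaves implicit.
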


\begin{proof}
	Let $\omega\in\{P,Q\}^\N$. By \eqref{item:g_nested+contain_0} of Proposition \ref{prop:g_main_step}, the sequence of patches $\left(\PPP_{\omega[1\ldots i]}^{(k_{i})}\right)_{i\in\N}$ is a nested sequence and by the proof of Proposition \ref{prop:g_main_step} it exhausts the plane. Thus 
	\[T_\omega := \bigcup_{i\in\N}\PPP_{\omega[1\ldots i]}^{(k_{i})}\]
	is a tiling of $\R^d$ and it satisfies the assertion. 
\end{proof}

Let $\Omega:=\{P,Q\}^\N$. Consider the equivalence relation on $\Omega$ in which $\omega\sim\omega'$ if the set $\{i\in\N \mid w(i)\neq w'(i)\}$ is finite. Since every equivalence class in this relation is countable, the cardinality of a set $\widetilde{\Omega}\subset \Omega$ of equivalence class representatives is $\cont$. We fix such a set of representatives $\widetilde{\Omega}$, then the following lemma completes the proof of Theorem \ref{thm:main_result_general}. 

\begin{lem}
	Let $\omega,\eta\in\widetilde{\Omega}$ be two distinct sequences, then the tilings $\TT_\omega$ and $\TT_\eta$, which are defined in Lemma \ref{lem:g_TT_omega_def}, are BD-non-equivalent.  
\end{lem}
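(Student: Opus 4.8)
The plan is to apply the tiling version of the Frettl\"oh--Smilansky--Solomon criterion, Corollary \ref{cor:from_Dirk+Yotam_for_tilings}, with a carefully chosen sequence of cubes (boxes) $A_m$. Since $\omega,\eta\in\widetilde\Omega$ are distinct representatives, they differ at infinitely many indices; fix an increasing sequence $(i_m)_{m\in\N}$ with $\omega(i_m)\neq\eta(i_m)$ for all $m$. For each $m$, both $\TT_\omega$ and $\TT_\eta$ contain the nested patch at level $i_m$, namely $\PPP_{\omega[1\ldots i_m]}^{(k_{i_m})}$ and $\PPP_{\eta[1\ldots i_m]}^{(k_{i_m})}$ respectively, and by property \eqref{item:g_translated_copy_of..} of Proposition \ref{prop:g_main_step} one of these is a translated copy of $\varrho^{k_{i_m}}(P)$ and the other of $\varrho^{k_{i_m}}(Q)$. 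I will take $A_m$ to be (a unit-cube-aligned approximation of) the support of $\PPP_{\omega[1\ldots i_m]}^{(k_{i_m})}$; since $\supp P$ and $\supp Q$ differ only by a translation, this support has the same shape regardless of which letter appears, up to translation, so this is a legitimate single choice of box for comparing the two tilings.

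The key estimate is to bound $\bigl|\#[A_m]^{\TT_\omega}-\#[A_m]^{\TT_\eta}\bigr|$ from below. First, the patch $\PPP_{\sigma^{i_m}}^{(k_{i_m})}$ sits inside $\TT_\sigma$ with its support disjoint from the boundary of the next-level patch, and its center $\xx(\PPP_{\sigma^{i_m}}^{(k_{i_m})})$ is within $c_1\lambda_1^{k_{i_m-1}/d}$ of the origin by property \eqref{item:g_distance_of_center_from_0}. Hence the symmetric difference of $A_m$ with the actual located support of $\PPP_{\sigma^{i_m}}^{(k_{i_m})}$ inside each tiling is contained in a boundary collar of $\partial A_m$ of width $O(\lambda_1^{k_{i_m-1}/d}) = O(\lambda_1^{k_{i_m}/(dh)})$, so its $d$-dimensional volume is at most $C\,\lambda_1^{k_{i_m}/(dh)}\,\mu_{d-1}(\partial A_m)$. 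Since the number of tiles of a patch in any region grows linearly in the volume of the region, this gives
\begin{equation*}
\Bigl|\#[A_m]^{\TT_\sigma}-\#\varrho^{k_{i_m}}(\sigma(i_m))\Bigr|\le c\,\lambda_1^{k_{i_m}/(dh)}\,\mu_{d-1}(\partial A_m)
\end{equation*}
for $\sigma\in\{\omega,\eta\}$, where I abuse notation writing $\#\varrho^{k}(P)$ for the tile count of the level-$k_{i_m}$ patch. Combining the two such inequalities via the triangle inequality, and using Lemma \ref{lem:number_of_tiles_in_large_P-Q} (whose hypotheses $\vol(P)=\vol(Q)$ and $\pp-\qq\notin\vv_t^\perp$ are exactly assumptions \eqref{item:main_thm_1} and \eqref{item:main_thm_2} of Theorem \ref{thm:main_result_general}, noting that equal supports force equal volumes) applied with the patches $P$ and $Q$ — which are legal copies of $\varrho^{a_1}(T_i),\varrho^{a_2}(T_j)$ — I get
\begin{equation*}
\bigl|\#[A_m]^{\TT_\omega}-\#[A_m]^{\TT_\eta}\bigr|\ge c_0\,\absolute{\lambda_t}^{k_{i_m}}-2c\,\lambda_1^{k_{i_m}/(dh)}\,\mu_{d-1}(\partial A_m).
\end{equation*}

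Finally I divide by $\mu_{d-1}(\partial A_m)$. Since $A_m$ is (up to translation and rescaling) $\varrho^{k_{i_m}}$ applied to a fixed patch, its boundary surface scales like $\xi^{(d-1)k_{i_m}} = \lambda_1^{(d-1)k_{i_m}/d}$, so $\mu_{d-1}(\partial A_m)\asymp \lambda_1^{(d-1)k_{i_m}/d}$; here I use the Hausdorff-measure scaling \eqref{eq:Hausdorff_scaling}. Thus
\begin{equation*}
\frac{\bigl|\#[A_m]^{\TT_\omega}-\#[A_m]^{\TT_\eta}\bigr|}{\mu_{d-1}(\partial A_m)}\ \ge\ c_0'\,\frac{\absolute{\lambda_t}^{k_{i_m}}}{\lambda_1^{(d-1)k_{i_m}/d}}\ -\ 2c\,\lambda_1^{k_{i_m}/(dh)}.
\end{equation*}
Write this as $\bigl(c_0'\,(\absolute{\lambda_t}/\lambda_1^{(d-1)/d})^{k_{i_m}} - 2c\,(\lambda_1^{1/(dh)})^{k_{i_m}}\bigr)$. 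By the defining choice of $h$ in \eqref{eq:h_and_k_i}, $\lambda_1^{1/h}<\absolute{\lambda_t}/\lambda_1^{(d-1)/d}$, so the first geometric term dominates the second and the whole expression tends to $\infty$ as $m\to\infty$ (recall $k_{i_m}\to\infty$). By Corollary \ref{cor:from_Dirk+Yotam_for_tilings}, $\TT_\omega$ and $\TT_\eta$ are BD-non-equivalent, which completes the proof of the lemma and hence of Theorem \ref{thm:main_result_general}.

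I expect the main technical obstacle to be the bookkeeping around the box $A_m$: one must be careful that $A_m$ can be taken simultaneously for both tilings (using that $\supp P,\supp Q$ differ by a translation and that both level-$i_m$ patches are centered near the origin with controlled displacement), that replacing the genuine patch support by a $\QQ_d^*$-box changes tile counts and boundary measure only by lower-order amounts, and that the exponent comparison in \eqref{eq:h_and_k_i} is applied with the correct powers $k_{i_m}$ rather than $k_{i_m}-1$ (the collar width is $O(\lambda_1^{k_{i_m-1}/d})$, and since $k_{i_m}=h\cdot k_{i_m-1}$ this is $O(\lambda_1^{k_{i_m}/(dh)})$, which is what makes the choice of $h$ do its job). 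Everything else is a routine combination of Proposition \ref{prop:g_main_step}, Lemma \ref{lem:number_of_tiles_in_large_P-Q}, and Corollary \ref{cor:from_Dirk+Yotam_for_tilings}.
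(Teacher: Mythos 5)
Your argument follows essentially the same route as the paper's own proof: the same sequence $(i_m)$, the same choice of $A_m$ as a $\QQ_d^*$-approximation of the support of the level-$i_m$ patch, the same use of Proposition \ref{prop:g_main_step}(\ref{item:g_distance_of_center_from_0}) to confine the symmetric difference to a boundary collar, the same application of Lemma \ref{lem:number_of_tiles_in_large_P-Q} for the main term, and the same exponent comparison via \eqref{eq:h_and_k_i}. The only quibble is your collar-volume bound $C\,r\,\mu_{d-1}(\partial A_m)$ with $r=O(\lambda_1^{k_{i_m}/(dh)})$: for a general finite union of unit cubes the correct bound (Laczkovich's Lemmas 2.1--2.2, as the paper uses) is $C\,r^{d}\,\mu_{d-1}(\partial A_m)$, which changes the error exponent from $\lambda_1^{k_{i_m}/(dh)}$ to $\lambda_1^{k_{i_m}/h}$ --- but the choice of $h$ in \eqref{eq:h_and_k_i} is tuned exactly to absorb this larger exponent, so the conclusion is unaffected.
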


\begin{proof}
	Since $\omega$ and $\eta$ are in $\widetilde{\Omega}$, and they are distinct, they differ at infinitely many places. Let $(i_m)_{m=1}^\infty$ be an increasing sequence so that $\omega(i_m)\neq\eta(i_m)$ for every $m$. We set $k_{i_m}$ as in \eqref{eq:h_and_k_i} and apply Corollary \ref{cor:from_Dirk+Yotam_for_tilings} with the sequence of sets $(A_m)_{m\in\N}$ defined by 
	\begin{equation*}
		B_{m}:=\supp{\varrho^{k_{i_m}}(P)_{\xx(P)}}, \qquad A_m:=\bigcup \left\{C(x)\in\mathcal{Q}_d \mid B_m\cap C(x)\neq\varnothing\right\} \quad (\text{see } \S \ref{subsec:BD}). 	
	\end{equation*}
	 
	By \eqref{item:g_distance_of_center_from_0} of Proposition \ref{prop:g_main_step} we have 
	\[\norm{\xx\left(\PPP_{\omega[1\ldots i_m]}^{(k_{i_m})} \right)},\norm{\xx\left(\PPP_{\eta[1\ldots i_m]}^{(k_{i_m})} \right)}\le c_1\cdot \lambda_1^{\frac{k_{i_m-1}}{d}}.\]
	Since $B_m$ and $\supp{\PPP_{\omega[1\ldots i_m]}^{(k_{i_m})}}$ differ by a translation and since $\xx(\varrho^{k_{i_m}}(P)_{\xx(P)})=0$ by definition, we deduce that 
	\[B_m\ \triangle\ \supp{\PP_{\omega[1\ldots i_m]}^{(k_{i_m})}}\subset \left\{\xx\in\R^d \mid \exists \yy\in\partial B_m, \ \norm{\xx - \yy}\le 
	c_1\cdot \lambda_1^{\frac{k_{i_m-1}}{d}} \right\},  \]
	and therefore 
	\[A_m\ \triangle\ \supp{\PP_{\omega[1\ldots i_m]}^{(k_{i_m})}}\subset \left\{\xx\in\R^d \mid \exists \yy\in\partial A_m, \ \norm{\xx - \yy}\le 
	\sqrt{d}\cdot c_1\cdot \lambda_1^{\frac{k_{i_m-1}}{d}} \right\} \df \mathcal{S}.  \]
	using e.g. \cite[Lammas 2.1 \& 2.2]{Laczk}, $\mu_d\left(\mathcal{S}\right)\le c(d)\cdot c_1^d\cdot \lambda_1^{k_{i_m-1}} \cdot \mu_{d-1}(\partial A_m)$ 
	and hence 
	\[\mu_d\left(A_m\ \triangle\ \supp{\PP_{\omega[1\ldots i_m]}^{(k_{i_m})}} \right)\le 
	c(d)\cdot c_1^d\cdot \lambda_1^{k_{i_m-1}} \cdot \mu_{d-1}(\partial A_m)
	,\]
	where $c(d)$ is a constant that depends on the dimension $d$. 
	Bounding the number of tiles in a region by the volume of the region divided by the smallest volume of a prototile, we obtain a constant $c_2>0$ that depends on $d$, $a$, $P$ and $\varrho$ so that 
	\begin{equation}\label{eq:g_tiles_in_Am_1}
	\absolute{\#[A_m]^{\TT_\omega} - \#\PP_{\omega[1\ldots i_m]}^{(k_{i_m})} } 
	\le \#\left[ A_m\ \triangle\ \supp{\PP_{\omega[1\ldots i_m]}^{(k_{i_m})}}\right]^{\TT_\omega}
	\le c_2 \cdot \lambda_1^{k_{i_m-1}} \cdot\mu_{d-1}(\partial A_m).
	\end{equation} 
	The above computations hold for $\PP_{\eta[1\ldots i_m]}^{(k_{i_m})}$ instead of $\PP_{\omega[1\ldots i_m]}^{(k_{i_m})}$ as well, and so we also have 
	\begin{equation}\label{eq:g_tiles_in_Am_2}
	\absolute{\#[A_m]^{\TT_\eta} - \#\PP_{\eta[1\ldots i_m]}^{(k_{i_m})} } \le c_2 \cdot \lambda_1^{k_{i_m-1}} \cdot \mu_{d-1}(\partial A_m).
	\end{equation}
	Combining \eqref{eq:g_tiles_in_Am_1} and  \eqref{eq:g_tiles_in_Am_2} we obtain that 
	
	\begin{equation}\label{eq:g_estimate_the_difference_on_Am}
	\absolute{\#[A_m]^{\TT_\omega} - \#[A_m]^{\TT_\eta} }\ge 
	\absolute{\#\PP_{\omega[1\ldots i_m]}^{(k_{i_m})} -\#\PP_{\eta[1\ldots i_m]}^{(k_{i_m})} } - 
	2c_2\cdot \lambda_1^{k_{i_m-1}} \cdot \mu_{d-1}(\partial A_m).
	\end{equation}
	Since $\omega(i_m)\neq\eta(i_m)$, and by Lemma \ref{lem:number_of_tiles_in_large_P-Q} and property \eqref{item:g_translated_copy_of..} of Proposition \ref{prop:g_main_step}, we have 
	
	\begin{equation}\label{eq:g_the_difference_on_special_P_^}
	\absolute{\#\PP_{\omega[1\ldots i_m]}^{(k_{i_m})} -\#\PP_{\eta[1\ldots i_m]}^{(k_{i_m})} } \ge c_0\absolute{\lambda_t}^{k_{i_m}}.  
	\end{equation}
	Relying on \eqref{eq:Hausdorff_scaling}, let $c_3>0$ be $\mathcal{H}_{d-1}(\partial \supp{P})$ times a constant that depends on $d$ such that 	
	\begin{equation}\label{eq:g_boundary_of_Am}
	\mu_{d-1}(\partial A_m) \le c_3 \left(\xi^{k_{i_m}}\right)^{d-1} 
	= c_3\left(\lambda_1^{(d-1)/d}\right)^{k_{i_m}},
	\end{equation}
	then by \eqref{eq:g_the_difference_on_special_P_^} and \ref{eq:g_boundary_of_Am} we have
	
	\begin{equation}\label{eq:g_with_the_boundary_of_Am}
	\absolute{\#\PP_{\omega[1\ldots i_m]}^{(k_{i_m})} -\#\PP_{\eta[1\ldots i_m]}^{(k_{i_m})} }/\mu_{d-1}(\partial A_m) \ge 
	\frac{c_0}{c_3} \left(\frac{\absolute{\lambda_t}}{\lambda_1^{(d-1)/d}}\right)^{k_{i_m}}.
	\end{equation}
	Note that $A_m\in\mathcal{Q}_d^*$, thus plugging \eqref{eq:g_estimate_the_difference_on_Am} 
	and \eqref{eq:g_with_the_boundary_of_Am} into \eqref{eq:non_BD_condition_tilings} we obtain that 
	
	\begin{equation}\label{eq:g_required-lower_bound}
	\frac{\absolute{\#[A_m]^{\TT_\omega} - \#[A_m]^{\TT_\eta} }}{\mu_{d-1}(\partial A_m)} \ge
	\frac{c_0}{c_3} \left(\frac{\absolute{\lambda_t}}{\lambda_1^{(d-1)/d}}\right)^{k_{i_m}} - 2c_2 \lambda_1^{k_{i_m-1}}.
	\end{equation}
	In view of \eqref{eq:h_and_k_i},  
	\[\left(\frac{\absolute{\lambda_t}}{\lambda_1^{(d-1)/d}}\right)^{k_{i_m}} = \left(\frac{\absolute{\lambda_t}}{\lambda_1^{(d-1)/d}}\right)^{h^{i_m-1}},
	\lambda_1^{k_{i_m-1}} = \left(\lambda_1^\frac{1}{h}\right)^{h^{i_m-1}} \text{and}\quad
	\lambda_1^\frac{1}{h} < \frac{\absolute{\lambda_t}}{\lambda_1^{(d-1)/d}},\] 
	which implies that the quantity on the right hand side of  \eqref{eq:g_required-lower_bound} tends to infinity with $m$. Then by Corollary \ref{cor:from_Dirk+Yotam_for_tilings}, the proof of the lemma and hence of Theorem \ref{thm:main_result_general} is complete. 	 
\end{proof}


\end{document}